\documentclass[a4paper,11pt]{amsart}
\textwidth=13,5cm
\usepackage{amsmath,amssymb}
\usepackage[latin1]{inputenc}
\usepackage{fancyhdr}
\usepackage{verbatim}

\usepackage{enumitem}
\setenumerate[1]{label=(\roman{*}), ref=(\roman{*})}
\setenumerate[2]{label=(\alph{*}), ref=(\alph{*})}

\newcommand\err{\mathbb R}

\newcommand{\eps}{\varepsilon}

\newcommand{\ncconv}{\overline{\mbox{conv}}}

\newcommand\Xs{X^\ast}
\newcommand\Xss{X^{\ast\ast}}
\newcommand\Xsss{X^{\ast\ast\ast}}

\newcommand{\HB}{\text{H{\kern -0.35em}B}}
\newcommand{\vertiii}[1]{{\left\vert\kern-0.25ex\left\vert\kern-0.25ex\left\vert#1\right\vert\kern-0.25ex\right\vert\kern-0.25ex\right\vert}}

\DeclareMathOperator{\spann}{span}

\DeclareMathOperator{\supp}{supp}
\newcounter{abcd}
{\setcounter{abcd}{0}
\begin{list}%
{{\rm (\alph{abcd})}} 
{\usecounter{abcd}
\parsep=\parskip
\topsep=1pt plus 2pt minus 1pt
\itemsep=1pt plus 2pt minus 1pt
\leftmargin=3\baselineskip \labelsep=.6\baselineskip
\labelwidth=2.4\baselineskip
\rightmargin 0pt}%
}%
{\end{list}}

\title{Diameter two properties, convexity and smoothness}

\author[T.~A.~Abrahamsen]{Trond A.~Abrahamsen}
\address[T.~A.~Abrahamsen]{Department of Mathematics, University of
  Agder, Postboks 422, 4604 Kristiansand, Norway.}
\email{trond.a.abrahamsen@uia.no}
\urladdr{http://home.uia.no/trondaa/index.php3}

\author[V.~Lima]{Vegard Lima}
\address[V.~Lima] {NTNU, Norwegian University of Science and
  Technology, Aalesund, Postboks 1517, N-6025 {\AA}lesund Norway.}
\email{Vegard.Lima@ntnu.no}

\author[O.~Nygaard]{Olav Nygaard}
\address[O.~Nygaard]{Department of Mathematics, University of Agder,
  Postboks 422, 4604 Kristiansand, Norway}
\email{Olav.Nygaard@uia.no}
\urladdr{http://home.hia.no/$~$olavn/}

\author[S.~Troyanski]{Stanimir Troyanski}
\address[S.~Troyanski]{
Institute of Mathematics and Informatics,
Bulgarian Academy of Science, bl.8,
acad. G. Bonchev str. 1113 Sofia, Bulgaria
and
Departamento de Matem{\'a}ticas, Universidad de Murcia,
Campus de Espinardo, 30100 Espinardo (Murcia), Spain
}
\email{stroya@um.es}

\thanks{The main results of this paper were presented in a talk by the
  fourth named author at the Seminario Matematico e Fisico di Milano
  in March 2016. The fourth named author was partially supported by
  MTM2014-54182-P and the Bulgarian National Scientific Fund under
  Grant DFNI-I02/10.}

\keywords {Diameter 2 property; strictly convex norm; smooth norm;
  Daugavet property} \subjclass[2010]{46B04, 46B20}

\newtheorem{thm}{Theorem}[section]
\newtheorem{prop}[thm]{Proposition}
\newtheorem{lem}[thm]{Lemma}
\newtheorem{cor}[thm]{Corollary}

\theoremstyle{definition}

\newtheorem{defn}[thm]{Definition}

\theoremstyle{remark}

\newtheorem{rem}[thm]{Remark}

\begin{document}

\begin{abstract}
  We study smoothness and strict convexity of (the bidual) of Banach spaces
  in the presence of diameter 2 properties.
  We prove that the strong diameter 2 property prevents the bidual
  from being strictly convex and being smooth, and we initiate the investigation
  whether the same is true for the (local) diameter 2 property.
  We also give characterizations of the following property
  for a Banach space $X$: ``For every slice
  $S$ of $B_X$ and every norm-one element $x$ in $S$, there is a point
  $y\in S$ in distance as close to 2 as we want.''
  Spaces with this property are shown to have non-smooth bidual.
\end{abstract}

\maketitle

\section{Introduction}\label{sec:intro}

Let $X$ be a (real) Banach space and denote, as usual, by $B_X$ and
$S_X$ its unit ball and unit sphere, respectively, and denote the
topological dual of $X$ by $\Xs$.

Recall that (the norm of) a Banach space $X$ is \emph{strictly convex}
if $\|\frac{x+y}{2}\| < 1$ when $x$ and $y$ are different points of $S_X$,
and that (the norm of) $X$ is \emph{smooth} if for every
$x \in S_X$ there is exactly one $x^* \in S_{X^*}$ such that $x^*(x) = 1$.
It is well-known that $X$ is smooth if $\Xs$
is strictly convex, and that $X$ is strictly convex if $\Xs$ is smooth.

It is a classical result from 1948 of J.~Dixmier
\cite[Th\'{e}or\`{e}me~20']{Dix} that $X^{\ast\ast\ast\ast}$ is never
strictly convex unless $X$ is reflexive.
Several authors have independently strengthened Dixmier's result by
showing that $\Xsss$ is not smooth for $X$ non-reflexive.
(A partial list of authors can be found in \cite{Smith76}.
Milman credits the result to M.~I.~Kadets \cite[Theorem~2.3]{Mil71II}.)
Note that this result is sharp in the sense that
James' space $J$ has a renorming such that the third dual
is strictly convex \cite{Smith76}.

The purpose of this paper is to study
the implications of the big-slice phenomena on
smoothness and convexity.
By a slice of $B_X$ of $X$ we mean a
set of the form
\begin{equation*}
  S(x^*,\eps):=\{x \in B_X: x^*(x) > 1 - \eps, x^*\in S_{\Xs}, \eps>0\}.
\end{equation*}
Recall the following successively stronger ``big-slice concepts'',
defined in \cite{MR3098474}:

\begin{defn} \label{defn:diam2p} A Banach space $X$ has the
  \begin{enumerate}
  \item
    \emph{local diameter 2 property} (LD2P) if every slice of $B_X$ has
    diameter 2.
  \item
    \emph{diameter 2 property} (D2P) if every
    non-empty relatively weakly open subset of $B_X$ has diameter 2.
  \item
    \emph{strong diameter 2 property} (SD2P) if every finite convex
    combination of slices of $B_X$ has diameter 2.
  \end{enumerate}
\end{defn}

In Section~\ref{sec:bidualrotund} we prove that $\Xss$ can be neither
strictly convex nor smooth if $X$ has the SD2P. In fact, we prove that
when $X$ has the SD2P, then $\Xss$ contains an isometric copy of
$L_1[0,1]$. We next ask whether it is possible that $X$ can have
(L)D2P while $\Xss$ is still strictly convex, and we give a partial
answer; namely we prove that if $X$ has a bimonotone basis and the
D2P, then the unit sphere of $\Xss$ contains a line segment of length
as close to 1 as we want.

Recall the following successively stronger ``rotundity concepts'':
 
\begin{defn} \label{defn:rotundity} A Banach space $X$ is
\begin{enumerate}
  \item
    \emph{strictly convex} (or rotund) if every $x \in S_X$
    is an extreme point in $B_X$, i.e., for every
    $y \in X$ we have that $y=0$ whenever $\|x \pm y\| = 1$.
  \item
    \emph{weakly midpoint locally uniformly rotund} (weakly
    MLUR) if every $x\in S_X$ is a weakly strongly extreme point of
    $B_{X}$, i.e., for every sequence $(x_n)$ in $X$, we have that
    $x_n \to 0$ weakly whenever $\|x \pm x_n\| \to 1$.
  \item
    \emph{midpoint locally uniformly rotund} (MLUR) if every $x\in
    S_X$ is a strongly extreme point of $B_{X}$, i.e., for every
    sequence $(x_n)$ in $X$, we have that $x_n \to 0$ in norm whenever
    $\|x \pm x_n\| \to 1$.
  \end{enumerate}
\end{defn}

It is clear that if $X$ is weakly MLUR then $X$ is
strictly convex. Smith \cite{Smith84} observed using the
Principle of Local Reflexivity that $X$ is weakly MLUR if and only if every
$x\in S_X$ is an extreme point of $B_{\Xss}$. In particular, if $\Xss$
is strictly convex, then $X$ is weakly MLUR. The converse is not true.

It was observed in \cite[Proposition~1.3]{AHNTT}
that if $X$ is weakly MLUR, then the LD2P implies the D2P
by Choquet's lemma \cite[Lemma~3.69]{MR2766381}.
In particular, the LD2P implies the D2P when $\Xss$ is strictly convex.

The main result of \cite{AHNTT} is that there exists an equivalent
norm $|\cdot|$ on $C[0,1]$ such that $X=(C[0,1],|\cdot|)$ is MLUR and
has the (L)D2P. In fact $X$ has the LD2P in the following stronger
sense:

\begin{defn}\label{defn:LD2P+}
  A Banach space $X$ has the \emph{local diameter 2
    property+ (LD2P+)} if for every $\eps> 0$, every slice $S$ of
  $B_X$, and every $x \in S \cap S_X$ there exists $y \in S$ such that
  $\|x - y\| > 2 -\eps$.
\end{defn}

Let $X$ be a Banach space and $I$ the identity operator on $X$. Recall
that $X$ has the \emph{Daugavet property} if the equation
\begin{equation*}
   \|I+T\| =  1 + \|T\|
\end{equation*}
holds for every rank 1 operator $T$ on $X$. The Daugavet property
can be characterized as follows (see \cite{MR1856978} or \cite{MR1784413}):

\begin{thm}\label{thm:daugavet-char}
Let $X$ be a Banach space. Then the following statements are
equivalent.
\begin{enumerate}
  \item\label{item:daugchar-a}
    $X$ has the Daugavet property.
  \item\label{item:daugchar-b}
    The equation $\|I + T\|= 1 + \|T\|$ holds for every weakly
      compact operator $T$ on $X$.
  \item\label{item:daugchar-c}
    For every $\eps > 0$, every $x \in
   S_X$, and every $x^* \in S_{X^*}$, there exists $y \in S(x^*,\eps)$ such that
   $\|x + y\| \ge 2 - \eps$.
  \item\label{item:daugchar-d}
    For every $\eps > 0$, every $x^* \in
   S_{X^*}$, and every $x \in S_X$, there exists $y^* \in S(x,\eps)$ such that
   $\|x^* + y^*\| \ge 2 - \eps$.
  \item\label{item:daugchar-e}
    For every $\eps > 0$ and every $x \in S_X$ we have $B_X =
    \ncconv(\Delta_\eps(x))$, where $\Delta_\eps(x)=\{y \in B_X: \|y-x\|
    \ge 2-\eps\}$.
\end{enumerate}
\end{thm}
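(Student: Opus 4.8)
The plan is to prove the circle $(b)\Rightarrow(a)\Rightarrow(c)\Rightarrow(e)\Rightarrow(b)$, and to obtain the equivalence of (d) with these by passing to adjoints. The implication $(b)\Rightarrow(a)$ is trivial, since a rank~$1$ operator is weakly compact. For $(a)\Rightarrow(c)$, fix $\eps>0$, $x\in S_X$ and $x^*\in S_{X^*}$, and test the Daugavet equation on the norm-one rank~$1$ operator $Ty=x^*(y)x$: from $\|I+T\|=2$ we obtain $z_n\in S_X$ with $\|z_n+x^*(z_n)x\|\to2$, and since $\|z_n+x^*(z_n)x\|\le1+|x^*(z_n)|$ this forces $x^*(z_n)\to1$ (after passing to a subsequence and, if necessary, changing signs); then $y:=z_n$ for $n$ large lies in $S(x^*,\eps)$ and $\|x+y\|\ge\|z_n+x^*(z_n)x\|-|1-x^*(z_n)|>2-\eps$. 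For the equivalence with (d) one runs the same rank~$1$ test inside $X^*$: (c) yields the Daugavet equation for rank~$1$ operators on $X$, hence $\|I_{X^*}+R^*\|=1+\|R^*\|$ for every adjoint $R^*$, and applying the argument of $(a)\Rightarrow(c)$ to such an $R^*$ in $X^*$ gives (d); the converse is the mirror image, with the roles of $X$ and $X^*$ interchanged.

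For $(c)\Rightarrow(e)$ I would argue by separation. Applying (c) to the unit vector $-x$ and an arbitrary $x^*\in S_{X^*}$, with some $\delta\in(0,\eps]$ in place of $\eps$, produces $y\in B_X$ with $x^*(y)>1-\delta$ and $\|y-x\|\ge2-\delta\ge2-\eps$, i.e.\ $y\in\Delta_\eps(x)$; letting $\delta\to0$ shows that $\sup\{x^*(y):y\in\Delta_\eps(x)\}=1$ for every $x^*\in S_{X^*}$. Consequently the closed convex set $\ncconv(\Delta_\eps(x))\subseteq B_X$ has the same support function as $B_X$, whence the two coincide; this is (e). (For the full statement one also wants $(e)\Rightarrow(c)$, which is the elementary remark that a slice $S(x^*,\eps)$ disjoint from $\Delta_\eps(-x)$ would force $B_X=\ncconv(\Delta_\eps(-x))\subseteq\{y\in B_X:x^*(y)\le1-\eps\}$, a proper subset of $B_X$.)

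The heart of the matter is $(e)\Rightarrow(b)$, the passage from rank~$1$ to arbitrary weakly compact operators. From (e) (together with $(e)\Rightarrow(c)$) one has the slice form of the Daugavet property: every slice of $B_X$ meets $\Delta_\eps(u)$ for each $u\in S_X$ and $\eps>0$. Given weakly compact $T$ with $\|T\|=1$ and $\eps>0$, the goal is a unit vector $x$ with $\|x+Tx\|>2-\eps$, equivalently a pair $x\in S_X$, $f\in S_{X^*}$ with $f(x)>1-\eps$ and $f(Tx)>1-\eps$. Here weak compactness enters in an essential way: $\ncconv(T(B_X))$ is weakly compact by the Krein--\v{S}mulian theorem, hence weakly sequentially compact, so the $T$-images of a well-chosen sequence $v_n\in S_X$ with $\|Tv_n\|\to1$ cluster weakly at a genuine point of $X$. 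One then iterates, alternately using the slice condition (to keep $f(x)$ close to $1$ on the slice currently in play) and these weak cluster points (to keep $f(Tx)$ close to $1$), and passes at the end to a weakly convergent subsequence whose limit yields the required $x$; this gives $\|I+T\|\ge1+\|T\|$, the reverse inequality being trivial. I expect this implication to be the main obstacle, precisely because slices of $B_X$ genuinely have diameter~$2$ under the Daugavet property, so one cannot simply ``localise'' $T$ on a slice; the weak compactness of $T(B_X)$ has to be exploited with care to force $x$ and $Tx$ onto a common near-norming functional---and indeed without weak compactness the equality $\|I+T\|=1+\|T\|$ can fail.
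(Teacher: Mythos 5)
Your reductions among (a), (c), (d), (e) are fine and essentially the standard ones: (b)$\Rightarrow$(a) is trivial, your rank-one test for (a)$\Rightarrow$(c) (with the sign flip) is correct, the support-function/separation argument for (c)$\Leftrightarrow$(e) works, and passing to adjoints of rank-one operators does give the equivalence with (d). (The paper itself gives no proof of this theorem; it quotes it from \cite{MR1856978} and \cite{MR1784413}, so the comparison is with the standard proofs there.)

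The genuine gap is exactly where you anticipate it: (e)$\Rightarrow$(b). What you offer there is not a proof but a hope. Extracting $v_n\in S_X$ with $\|Tv_n\|\to 1$ and a weak cluster point of $(Tv_n)$, and then ``iterating, alternately using the slice condition and these weak cluster points,'' does not produce the required pair $x\in B_X$, $f\in S_{X^*}$ with $f(x)>1-\eps$ and $f(Tx)>1-\eps$: weak cluster points give no norm control (the final weak limit need not have $\|x+Tx\|$ large unless you already have a single fixed $f$ almost norming both $x$ and $Tx$, which is precisely what the iteration is supposed to manufacture and never does), and Eberlein--\v{S}mulian by itself gives no mechanism for keeping $f(Tx)$ large while you move $x$ inside a slice to make $f(x)$ large. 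The missing idea in the classical argument (Kadets--Shvidkoy--Sirotkin--Werner, \cite[Theorem~2.3]{MR1621757}) is dentability: $K=\ncconv(T(B_X))$ is weakly compact, hence has the Radon--Nikod\'{y}m property, so by Bourgain's lemma every slice of $K$ contains a further slice of arbitrarily small diameter. Choosing such a small-diameter slice $\{y\in K: f(y)>\sup_K f-\beta\}$ consisting of points of norm close to $\|T\|$, its preimage $W=\{x\in B_X: (T^*f)(x)>\sup_K f-\beta\}$ is a slice of $B_X$ on which $T$ is almost constant, say $Tx\approx v$ with $\|v\|\approx\|T\|$. Now apply (c) to the slice $W$ and the point $v/\|v\|$ to find $y\in W$ with $\|y+v/\|v\|\,\|\ge 2-\eps$, whence $\|y+Ty\|\ge \|y+v/\|v\|\,\|-\|v/\|v\|-v\|-\|v-Ty\|\ge 1+\|T\|-C\eps$. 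So, contrary to your closing remark, the proof does ``localise'' $T$ on a slice -- not by shrinking the slice of $B_X$ (which indeed has diameter $2$), but by using RNP-dentability of the weakly compact set $T(B_X)$ to make the image of a slice small. Without this ingredient (or an equivalent substitute) your step (e)$\Rightarrow$(b) is not established.
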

In Section~\ref{sec:ld2p+} we prove a similar characterization of the LD2P+,
see Theorems~\ref{thm:ikw} and \ref{prop:ld2p+-char}.
It is known that the dual of a Banach
space with the Daugavet property
is neither strictly convex nor smooth \cite[Corollary~2.13]{MR1621757}.
In Corollary~\ref{cor:ld2p+no-smooth-bidual} we show
that if $X$ has the LD2P+, then $\Xss$ is not smooth.
We also prove that just like the diameter two properties above
the LD2P+ is inherited by ai-ideals
(we postpone the definition of this concept till we need it).

The notation and conventions we use are standard and follow
\cite{JL}. When considered necessary, notation and concepts are
explained as the text proceeds.

\section{Strict convexity and smoothness of
  $\Xss$}\label{sec:bidualrotund}

A result of Day \cite[Theorem~9]{Day55} says that
neither $\ell_1(\Gamma)$, $\Gamma$ uncountable,
nor $\ell_\infty$ have equivalent smooth renormings.
So for example no equivalent norm on
$C[0,1]$ has a bidual that is smooth or strictly convex.

Our first aim in this section is to prove that
if a Banach space $X$ has the SD2P, then the bidual
is neither smooth nor strictly convex.
Banach spaces which are M-ideals in their biduals
are called \emph{M-embedded}.
It is known that non-reflexive M-embedded spaces
have the SD2P \cite[Theorem~4.10]{MR3098474}.
From \cite[p.~109]{MR547509}
(see also \cite[Proposition~I.1.7]{HWW}) it is clear
that the bidual of a non-reflexive M-embedded space
is neither smooth nor strictly convex.

Let us note that in general we cannot say anything
about the absence of smoothness or convexity in $X$ and $X^*$
in the presence of the SD2P. Indeed,
there exists a smooth M-embedded renorming $X$ of $c_0$ with
strictly convex dual \cite[Corollary~III.2.12]{HWW}.
There also exists a strictly convex M-embedded space $X$
with smooth dual \cite[Remark~IV.1.17]{HWW}.

We will need the following concept:

\begin{defn}
  A sequence $(x_n)$ in $X$ is said to be
  \emph{asymptotically isometric} $\ell_1$, if there
  exists a sequence $(\delta_n)$ in $(0,1)$ decreasing to $0$ and such
  that
  \begin{equation*}
     \sum_{n=1}^m(1 - \delta_n)|a_n| \le \|\sum_{n=1}^m a_n x_n\| \le
    \sum_{n=1}^m |a_n|
  \end{equation*}
   for each finite sequence $(a_n )_{n=1}^m$ in $\err$.
\end{defn}

From \cite[Remark~II.5.2]{G-octa} we have the following:

\begin{defn}
  A Banach space $X$ is said to
  be \emph{octahedral} if for every finite dimensional subspace $E$ of $X$
  and every $\eps > 0$, there exists $y \in S_X$ such that
  for every $x \in E$ and every $\lambda \in \err$, we have
  \begin{equation*}
    \| x + \lambda y \| \ge (1-\varepsilon)(\|x\|+|\lambda|).
  \end{equation*}
\end{defn}

\begin{lem}\label{lem:octa-asymp-ell1}
  If $X$ is octahedral, then $X$ contains an asymptotically isometric
  $\ell_1$ sequence.
\end{lem}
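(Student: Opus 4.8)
The plan is to build the sequence one vector at a time: at stage $n$ I apply the definition of octahedrality to the span of the vectors already chosen, and I keep the error parameters summable so that, in the end, they merge into a single sequence $(\delta_n)$ decreasing to $0$.

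Concretely, I first fix a sequence $(\eps_n)$ in $(0,1)$ with $\sum_{n=1}^{\infty}\eps_n<\infty$ (say $\eps_n=2^{-n}$), and set $c_j:=\prod_{i=j}^{\infty}(1-\eps_i)$, so that $0<c_j<1$, $c_j\uparrow 1$, and $\delta_n:=1-c_n$ is strictly decreasing to $0$. Then I construct $(x_n)\subseteq S_X$ recursively: let $x_1\in S_X$ be arbitrary; having chosen $x_1,\dots,x_n$, put $E_n:=\spann\{x_1,\dots,x_n\}$ and apply octahedrality to the finite dimensional subspace $E_n$ and the number $\eps_{n+1}$ to obtain $x_{n+1}\in S_X$ with
\[
\|x+\lambda x_{n+1}\|\ge(1-\eps_{n+1})\bigl(\|x\|+|\lambda|\bigr)\qquad\text{for all }x\in E_n,\ \lambda\in\err .
\]
Since $\|x_n\|=1$ for every $n$, the triangle inequality already gives the upper estimate $\|\sum_{k=1}^{m}a_kx_k\|\le\sum_{k=1}^{m}|a_k|$ for every finite scalar sequence $(a_k)_{k=1}^{m}$.

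For the lower estimate I fix $a_1,\dots,a_m\in\err$ and apply the displayed inequality with $x=\sum_{k=1}^{n-1}a_kx_k\in E_{n-1}$ and $\lambda=a_n$, successively for $n=m,m-1,\dots,2$; this produces the recursion $\|\sum_{k=1}^{n}a_kx_k\|\ge(1-\eps_n)\bigl(\|\sum_{k=1}^{n-1}a_kx_k\|+|a_n|\bigr)$, and iterating it down to $n=2$ (where $\|\sum_{k=1}^{1}a_kx_k\|=|a_1|$) yields
\[
\Bigl\|\sum_{k=1}^{m}a_kx_k\Bigr\|\ge\Bigl(\prod_{i=2}^{m}(1-\eps_i)\Bigr)|a_1|+\sum_{j=2}^{m}\Bigl(\prod_{i=j}^{m}(1-\eps_i)\Bigr)|a_j|\ge\sum_{n=1}^{m}c_n|a_n|=\sum_{n=1}^{m}(1-\delta_n)|a_n| ,
\]
where in the last step each finite product was bounded below by the corresponding infinite tail product $c_j$ (and $c_2\ge c_1$ was used for the coefficient of $|a_1|$). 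Hence $(x_n)$ is asymptotically isometric $\ell_1$ with parameter sequence $(\delta_n)$.

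The only delicate point is this final passage: in the iterated inequality the coefficient of each $|a_j|$ depends on $m$, so to get a bound uniform in $m$ one needs the infinite tail products $\prod_{i=j}^{\infty}(1-\eps_i)$ to be strictly positive — which is exactly why $(\eps_n)$ must be chosen summable and not merely null. Everything else is routine; in particular it comes for free that $x_{n+1}\notin E_n$ (otherwise the defining inequality with $x=x_{n+1}$, $\lambda=-1$ would give $0\ge 2(1-\eps_{n+1})>0$), so the $x_n$ are automatically linearly independent, although this is not needed.
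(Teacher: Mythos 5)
Your proof is correct, and it follows the same basic strategy as the paper: build $(x_n)$ inductively, at each stage applying octahedrality to the (span of the) previously chosen vectors and then propagating the lower $\ell_1$-estimate. The difference lies entirely in the error bookkeeping. You let the octahedrality losses accumulate multiplicatively, iterating $\|\sum_{k=1}^{n}a_kx_k\|\ge(1-\varepsilon_n)\bigl(\|\sum_{k=1}^{n-1}a_kx_k\|+|a_n|\bigr)$ and bounding the resulting finite products by the infinite tail products $c_j=\prod_{i\ge j}(1-\varepsilon_i)$; this forces you to take $(\varepsilon_n)$ summable, and the sequence $(\delta_n)=(1-c_n)$ you end up with is whatever the tail products give you. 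The paper instead uses Pfitzner's trick: it fixes an \emph{arbitrary} null sequence $(\delta_n)\subset(0,1)$ in advance, carries an additional additive slack term $\eta_m\sum_{n=1}^m|a_n|$ in the induction hypothesis, and at each step chooses the octahedrality parameter small enough that the multiplicative loss $(1-\eps)$ is absorbed by the decrease $\eta_m\mapsto\eta_{m+1}$ of the slack, so the coefficients $1-\delta_n$ survive unchanged. Thus the paper's argument yields the asymptotically isometric $\ell_1$-estimate with any prescribed $(\delta_n)$ decreasing to $0$, while yours produces some such sequence; since the lemma only asserts existence, your version fully suffices, and it is arguably a bit more streamlined (no auxiliary $\eta_n$, no induction on the estimate itself -- just a telescoped product). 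Your closing remarks (why summability of $(\varepsilon_n)$ is genuinely needed in your scheme, and the automatic linear independence of the $x_n$) are accurate.
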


The proof uses an idea of H.~Pfitzner (see \cite[Theorem~2]{MR1814162}).

\begin{proof}
  Let $(\delta_n) \subset (0,1)$ such that $\delta_n \to 0$.
  Let $\eta_1 = \frac{\delta_1}{2}$
  and $\eta_{n+1} = \frac{1}{2}\min\{\eta_n, \delta_{n+1}\}.$ We will
  find a sequence $(x_n) \subset S_X$ such that
  \begin{align}
    \label{eq:3}
    \sum_{n=1}^m (1-\delta_n)|a_n| + \eta_m\sum_{n=1}^m|a_n| \le
    \|\sum_{n=1}^m a_nx_n\|
  \end{align}
  by induction. The $m=1$ step is trivial.

  Assume (\ref{eq:3}) holds for a fixed $m\ge 1$.
  Choose $\eps > 0$ such that
  \begin{align*}
    \eps \le \frac{\eta_m - \eta_{m+1}}{1-\delta_n + \eta_m}
  \end{align*}
  for $n=1,2,\ldots,m$
  and 
  \begin{align*}
    \eps \le \delta_{m+1} - \eta_{m+1}.
  \end{align*}
  Find, using the assumption
  and octahedrality, $x_{m+1} \in S_X$ such
  \begin{align}
    \label{eq:4}
    (1-\eps)\bigg(\sum_{n=1}^m (1-\delta_n)|a_n| +
    \eta_m\sum_{n=1}^m|a_n| + |a_{m+1}|\bigg) \le \|\sum_{n=1}^{m+1} a_nx_n\|.
  \end{align}
  Then 
  \begin{align}
    \label{eq:5}
    \sum_{n=1}^{m+1} (1-\delta_n)|a_n| + \eta_{m+1}\sum_{n=1}^{m+1}|a_n| \le
    \|\sum_{n=1}^{m+1} a_nx_n\|
  \end{align}
  because the left hand side of (\ref{eq:4}) in this case will be greater than the
  left hand side of (\ref{eq:5}).
\end{proof}

\begin{rem}
  As noted above there is a smooth M-embedded Banach space $X$
  with strictly convex dual and a strictly convex
  M-embedded Banach space $X$ with smooth dual.
  By \cite[Theorem~2]{MR1814162} $\Xs$ contains an
  asymptotically isometric $\ell_1$ sequence
  whenever $X$ is M-embedded.
  Hence the presence of an asymptotically isometric $\ell_1$ sequence
  in a Banach space $X$ does not prevent $X$ from
  being strictly convex or smooth -- even when $X$ is a dual space.
\end{rem}

\begin{thm}\label{thm:SD2P_case} If $X$ has the SD2P, then $\Xss$
  contains an isometric copy of $L_1[0,1]$.
\end{thm}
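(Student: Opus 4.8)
The plan is to reduce, via the known duality between the SD2P and octahedrality, to an asymptotically isometric $\ell_1$ sequence \emph{in $\Xs$}, and then to build an isometric copy of $L_1[0,1]$ inside $(\Xs)^{\ast}=\Xss$ out of such a sequence. Recall that $X$ has the SD2P if and only if $\Xs$ is octahedral; so $\Xs$ is octahedral. Applying Lemma~\ref{lem:octa-asymp-ell1} with $\Xs$ in the role of $X$ yields $(f_n)\subseteq S_{\Xs}$ and $(\delta_n)\subseteq(0,1)$ decreasing to $0$ with
\begin{equation*}
  \sum_{n=1}^m(1-\delta_n)|a_n|\ \le\ \Bigl\|\sum_{n=1}^m a_n f_n\Bigr\|\ \le\ \sum_{n=1}^m|a_n|
\end{equation*}
for every finite scalar sequence $(a_n)$. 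It then suffices to establish the self-contained statement that \emph{whenever a Banach space $Z$ contains an asymptotically isometric $\ell_1$ sequence, $Z^{\ast}$ contains an isometric copy of $L_1[0,1]$}, and to apply it with $Z=\Xs$, so that $Z^{\ast}=\Xss$.

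For this I would use the dyadic model of $L_1[0,1]$: writing $\mathcal D_k$ for the $2^k$ dyadic subintervals of $[0,1]$ of length $2^{-k}$ and $D_k=\spann\{\chi_I:I\in\mathcal D_k\}$, the subspaces $D_k$ increase to a dense subspace of $L_1[0,1]$, each $D_k$ is isometric to a rescaling of $\ell_1^{2^k}$, the inclusions $D_k\hookrightarrow D_{k+1}$ are isometric, and each $\chi_I$ is the sum of the $\chi$'s of the two halves of $I$. Hence it is enough to produce, for every $k$, norm-one elements $v_1^{(k)},\dots,v_{2^k}^{(k)}\in\Xss$ together with, for each sign vector $\varepsilon\in\{-1,1\}^{2^k}$, a functional $h_\varepsilon^{(k)}\in B_{\Xs}$ satisfying $\langle v_j^{(k)},h_\varepsilon^{(k)}\rangle=\varepsilon_j$ for all $j$ --- which forces $(v_j^{(k)})_{j=1}^{2^k}$ to span $\ell_1^{2^k}$ isometrically --- subject to the compatibility relations $v_j^{(k)}=\tfrac12\bigl(v_{2j-1}^{(k+1)}+v_{2j}^{(k+1)}\bigr)$. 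The functionals $h_\varepsilon^{(k)}$ are drawn from a finite block of the $f_n$'s with large indices, indexed by the sign vectors; and for a prescribed sign pattern on a block of indices $\ge N$, a Hahn--Banach extension of the corresponding functional on the span of that block provides an element of $\Xss$ of norm at most $(1-\delta_N)^{-1}$ realising the pattern. The asymptotically isometric $\ell_1$ estimate above is precisely what pushes this constant to $1$ as $N\to\infty$; taking $w^{\ast}$-cluster points in the $w^{\ast}$-compact ball $B_{\Xss}$ of the resulting almost-isometric, almost-compatible finite systems produces the exact system, and then $\cspann\{v_j^{(k)}:k\ge 0,\ 1\le j\le 2^k\}$ is isometric to $L_1[0,1]$.

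The main obstacle is this last step. The $w^{\ast}$-limits for the different levels $k$ must be taken \emph{simultaneously and compatibly}, through an inductive (diagonal) scheme in which the block of functionals used at level $k+1$ refines the data already fixed at level $k$, so that one single copy of $L_1[0,1]$ emerges rather than an unrelated family of copies of the spaces $\ell_1^{2^k}$; and the constant $(1-\delta_N)^{-1}$ has to be absorbed into an exact isometry in the limit, using that the norm of $\Xss$ is $w^{\ast}$-lower semicontinuous while pairing against each fixed $f_n$ is $w^{\ast}$-continuous. (If one is willing to cite the implication ``$Z$ contains an asymptotically isometric $\ell_1$ sequence $\Rightarrow Z^{\ast}$ contains an isometric copy of $L_1[0,1]$'' as known, the argument reduces to the short first paragraph.)
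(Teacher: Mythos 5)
Your first paragraph is precisely the paper's proof of Theorem~\ref{thm:SD2P_case}: the SD2P is equivalent to octahedrality of $\Xs$ by \cite[Theorem~2.4]{HLP}, Lemma~\ref{lem:octa-asymp-ell1} applied to $\Xs$ gives an asymptotically isometric $\ell_1$ sequence $(f_n)$ there, and the implication you isolate (an asymptotically isometric $\ell_1$ sequence in $Z$ forces an isometric copy of $L_1[0,1]$ in $Z^\ast$) is exactly what the paper quotes from Dilworth--Girardi--Hagler \cite[Theorem~2]{MR1751149}. So if you cite that theorem, your argument is correct and coincides with the paper's.

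The attempted self-contained proof of that last implication, however, has a genuine gap, sitting exactly where you flag ``the main obstacle''. Your intermediate goal is too strong: you require norm-one $v_j^{(k)}\in\Xss$ together with functionals $h_\varepsilon^{(k)}$ taken \emph{from the sequence $(f_n)$ itself} satisfying the exact relations $\langle v_j^{(k)},h_\varepsilon^{(k)}\rangle=\varepsilon_j$. If $\varepsilon$ and $\varepsilon'$ differ in the $j$-th coordinate, this forces $\|f_{n(\varepsilon)}-f_{n(\varepsilon')}\|\ge\langle v_j^{(k)},f_{n(\varepsilon)}-f_{n(\varepsilon')}\rangle=2$, hence $=2$ exactly; but an asymptotically isometric $\ell_1$ sequence only carries the factors $(1-\delta_n)$ and need not contain any pair at distance exactly $2$. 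In fact the Remark following Lemma~\ref{lem:octa-asymp-ell1} exhibits a strictly convex dual $\Xs$ (of a non-reflexive M-embedded space, which has the SD2P) containing such a sequence, and in a strictly convex space two non-antipodal elements of the ball are never at distance $2$; so the exact system you aim for can fail to exist even in instances covered by the theorem. The limiting scheme you propose does not repair this: letting the blocks start at $N\to\infty$ and taking $w^\ast$-cluster points does give $\|v_j\|\le 1$ by $w^\ast$-lower semicontinuity, but the norming relations were against functionals $f_{n(\varepsilon)}$ that change with $N$, and a Hahn--Banach extension from the span of one block is completely uncontrolled on all other $f_n$'s, so every lower estimate evaporates in the limit; $w^\ast$-continuity of the pairing with a fixed $f_n$ is of no help. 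The way out --- and, in essence, the content of \cite[Theorem~2]{MR1751149} --- is to keep exactness only on the upper side ($\|v_j^{(k)}\|\le 1$ and the exact averaging relations) and to demand only \emph{approximate} norming from below, which suffices because the norm in $\Xss$ is a supremum over $B_{\Xs}$, organizing the construction along a tree of blocks so that this approximate norming survives the limit. As written, your sketch does not supply this step, so the self-contained route does not go through; citing \cite{MR1751149}, as the paper does, is the clean way to finish.
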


\begin{proof}
  We know from \cite[Theorem~2.4]{HLP} that $X$ has the SD2P if and
  only if $X^*$ is octahedral.
  From Lemma~\ref{lem:octa-asymp-ell1} we know that an octahedral space
  contains an asymptotically isometric $\ell_1$ sequence. From
  \cite[Theorem~2]{MR1751149} we then have that $X^{**}$ contains
  an isometric copy of $L_1[0,1]$.
\end{proof}

Since $L_1[0,1]$ is neither smooth nor strictly convex
the following corollary is immediate.

\begin{cor}\label{cor:SD2P-bidual-nsc}
  If $X$ has the SD2P, then $\Xss$ is neither strictly convex nor
  smooth.
\end{cor}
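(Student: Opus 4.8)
The statement follows at once from Theorem~\ref{thm:SD2P_case}, so the plan is simply to unwind why a Banach space containing an isometric copy of $L_1[0,1]$ can be neither strictly convex nor smooth. For strict convexity there is nothing to do beyond recalling that strict convexity passes to closed subspaces, so an isometric copy of the (non-strictly-convex) space $L_1[0,1]$ inside $\Xss$ already rules it out; equivalently, if $u,v$ denote the images in $\Xss$ of two disjointly supported norm-one indicator functions, then $\tfrac12(u+v)\in S_{\Xss}$ is a nontrivial midpoint. The point that needs a (tiny) argument is smoothness, because --- unlike strict convexity --- smoothness is \emph{not} inherited by subspaces, so one cannot merely quote that $L_1[0,1]$ is not smooth.

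For the smoothness part the plan is to exhibit one explicit point of $S_{\Xss}$ with two distinct norming functionals. By Theorem~\ref{thm:SD2P_case}, $\Xss$ contains an isometric copy of $L_1[0,1]$, hence of $\ell_1^2$: take, say, $f=2\chi_{[0,1/2]}$ and $g=2\chi_{[1/2,1]}$ in $L_1[0,1]$, for which $\|\alpha f+\beta g\|_1=|\alpha|+|\beta|$. Let $u,v\in S_{\Xss}$ be their images, so that $\spann\{u,v\}$ is two-dimensional with $\|\alpha u+\beta v\|=|\alpha|+|\beta|$ for all real $\alpha,\beta$. Define $\psi_{\pm}$ on $\spann\{u,v\}$ by $\psi_{\pm}(\alpha u+\beta v)=\alpha\pm\beta$; then $\|\psi_{\pm}\|=1$ (since $|\alpha\pm\beta|\le|\alpha|+|\beta|$) and $\psi_{\pm}(u)=1=\|u\|$. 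Extend $\psi_{+}$ and $\psi_{-}$ by Hahn--Banach to $\Psi_{+},\Psi_{-}\in S_{\Xsss}$. Since $\Psi_{+}(v)=1\neq-1=\Psi_{-}(v)$ we have $\Psi_{+}\neq\Psi_{-}$, while $\Psi_{+}(u)=\Psi_{-}(u)=1$; thus $u$ is norm-one but has two distinct norming functionals, so $\Xss$ is not smooth.

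There is essentially no obstacle here: all the substance is already contained in Theorem~\ref{thm:SD2P_case}, and the only thing to be careful about is not to claim that smoothness passes to subspaces --- which is why the argument above extracts a concrete $\ell_1^2$ and produces the offending point of $S_{\Xss}$ by hand rather than invoking non-smoothness of $L_1[0,1]$ abstractly.
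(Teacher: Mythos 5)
Your proof is correct and follows the paper's route: the corollary is deduced immediately from Theorem~\ref{thm:SD2P_case} together with the fact that $L_1[0,1]$ is neither strictly convex nor smooth, which is exactly what the paper does. One correction to your commentary: smoothness \emph{is} inherited by subspaces --- the Hahn--Banach extension argument you carry out for the point $u$ (two distinct norm-one functionals on the subspace extend to two distinct norm-one functionals on $\Xss$ norming $u$) is precisely the standard proof of this hereditary property --- so your stated reason for needing the explicit $\ell_1^2$ construction is mistaken, although the construction itself is sound and the conclusion is unaffected.
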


From Corollary~\ref{cor:SD2P-bidual-nsc} a natural question arises:
If $X$ has the D2P, can $\Xss$ be strictly convex?
(Recall from the Introduction that when $\Xss$ is strictly convex,
LD2P and D2P for $X$ must be the same thing.)
We will give a negative answer to this question in the case
$X$ has a bimonotone basis in Proposition~\ref{prop:D2Pcase_bsmono} below.

We start with an alternative description of the D2P.

\begin{prop}\label{alterd2} The following statements are equivalent:
\begin{enumerate}
  \item $X$ has the D2P.
  \item Whenever $\eps>0$, $x \in X$ with $\|x\|<1$, and $F$ is a
    finite dimensional subspace of $X^\ast$, there exist $y_1, y_2\in
    F_{\perp}$ with $\|x+y_i\|<1$, $i=1,2$, such that $\|y_1-y_2\|>2-\eps$.
  \item Whenever $\eps>0$, $x \in X$ with $\|x\|<1$, and $E$ is a finite
    co-dimensional subspace of $X$, there exists $y_1, y_2\in E$ with
    $\|x+y_i\|<1$, $i=1,2$, such that $\|y_1-y_2\|>2-\eps$.
\end{enumerate}
\end{prop}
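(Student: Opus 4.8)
The plan is to prove (i) $\Rightarrow$ (ii), (ii) $\Rightarrow$ (iii) and (iii) $\Rightarrow$ (i); the remaining implication (iii) $\Rightarrow$ (ii) is immediate, since $F_\perp$ is a closed subspace of finite codimension. The substance is in (i) $\Rightarrow$ (ii). One fixes $\eps>0$, $x\in X$ with $\|x\|<1$, and $F=\spann\{x_1^*,\dots,x_n^*\}$ with $x_1^*,\dots,x_n^*$ linearly independent of norm one. Since the $x_i^*$ are independent, $z\mapsto(x_1^*(z),\dots,x_n^*(z))$ maps $X$ onto $\R^n$, so one may pick a biorthogonal system $u_1,\dots,u_n\in X$ with $x_i^*(u_j)=\delta_{ij}$; put $M=\max_j\|u_j\|$. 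Choose $\lambda\in\bigl(\max\{\|x\|,\,1-\eps/6\},\,1\bigr)$ and set $\delta=(1-\lambda)/(2Mn)$.

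Next one uses the D2P inside the ball $\lambda B_X$, where (via the weak homeomorphism $z\mapsto\lambda^{-1}z$) every nonempty relatively weakly open set has diameter $2\lambda$. The set $W=\{z\in\lambda B_X:\ |x_i^*(z-x)|<\delta,\ i=1,\dots,n\}$ is nonempty (it contains $x$) and relatively weakly open, so there are $z_1,z_2\in W$ with $\|z_1-z_2\|>2\lambda-\eps/2$. Put $v_i=\sum_{j=1}^n x_j^*(z_i-x)\,u_j$ and $y_i=z_i-x-v_i$. Then $x_j^*(y_i)=0$ for all $i,j$, hence $y_i\in F_\perp$; moreover $\|x+y_i\|=\|z_i-v_i\|\le\lambda+Mn\delta=(1+\lambda)/2<1$, and $\|y_1-y_2\|\ge\|z_1-z_2\|-2Mn\delta>2\lambda-\eps/2-(1-\lambda)=3\lambda-1-\eps/2>2-\eps$, the last step being exactly the choice $\lambda>1-\eps/6$. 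This gives (ii).

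For (ii) $\Rightarrow$ (iii): if $E$ is a finite codimensional subspace, then its closure $\overline E$ is a closed finite codimensional subspace, so $\overline E=F_\perp$ with $F:=E^\perp$ finite dimensional. Applying (ii) to $x$, $F$ and $\eps/2$ gives $y_1,y_2\in\overline E$ with $\|x+y_i\|<1$ and $\|y_1-y_2\|>2-\eps/2$; replacing each $y_i$ by a sufficiently close point of the dense subspace $E$ keeps the (open) conditions $\|x+y_i\|<1$ and $\|y_1-y_2\|>2-\eps$. For (iii) $\Rightarrow$ (i): let $W\subseteq B_X$ be nonempty and relatively weakly open. Since $0$ is an interior point of $B_X$ and $W$ is a relative weak neighborhood of each of its points, $W$ contains some $x$ with $\|x\|<1$, and then a basic neighborhood $\{z\in B_X:\ |x_i^*(z-x)|<\delta,\ i=1,\dots,n\}\subseteq W$. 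With $E=\bigcap_i\ker x_i^*$, (iii) yields $y_1,y_2\in E$ with $\|x+y_i\|<1$ and $\|y_1-y_2\|>2-\eps$; then $x+y_1,x+y_2\in W$ (each has norm $<1$ and agrees with $x$ on every $x_i^*$) and $\|(x+y_1)-(x+y_2)\|=\|y_1-y_2\|>2-\eps$. As $\eps$ was arbitrary, $\diam W=2$.

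The only genuine obstacle is the mismatch exploited in (i) $\Rightarrow$ (ii): the D2P furnishes points in the \emph{closed} ball, whereas (ii) demands the strict bound $\|x+y_i\|<1$ and that the perturbations sit \emph{exactly} in $F_\perp$. Shrinking the ball to $\lambda B_X$ buys the room for the strict inequality, and the biorthogonal correction $v_i$ projects the chosen points into $F_\perp$; the remainder is a routine matching of the constants $\eps$, $\lambda$ and $\delta$.
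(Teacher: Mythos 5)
Your argument is correct, and it takes a noticeably different route from the paper's. The paper attacks (i) $\Rightarrow$ (iii) directly: assuming (as it implicitly must) that $E$ is closed, it picks a finite-dimensional complement $F\ni x$ with $X=E\oplus F$ and a bounded projection $P$ onto $F$ along $E$, uses the weak neighbourhood $\{w\in B_X:\|P(x-w)\|<\delta\}$, asserts one can choose $w_1,w_2$ there of norm $<1-\delta$ with $\|w_1-w_2\|>2-3\delta$, and corrects by $y_i=w_i-Pw_i$; then (iii) $\Rightarrow$ (ii) is free via pre-annihilators, and (ii) $\Rightarrow$ (i) is essentially your (iii) $\Rightarrow$ (i). You instead prove (i) $\Rightarrow$ (ii) directly, replacing the complemented-subspace projection by the finite-rank correction $z\mapsto\sum_j x_j^*(z-x)u_j$ built from a biorthogonal system (which is the same idea in dual clothing: a projection with kernel $F_\perp$), and you obtain the strict bound $\|x+y_i\|<1$ by working in the shrunken ball $\lambda B_X$ and using the scaled D2P there. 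This is a genuine gain in rigor at the one point the paper glosses over: the paper's claim that one can find points of norm $<1-\delta$ in $W$ almost $2$ apart itself needs exactly the kind of scaling you make explicit. Your (ii) $\Rightarrow$ (iii) via $\overline E=(E^\perp)_\perp$ plus density of $E$ in $\overline E$ also covers non-closed finite-codimensional $E$, which the paper's projection argument does not literally handle (a bounded projection along $E$ requires $E$ closed); the price is this extra approximation step, which the paper avoids by routing the easy implication the other way, from (iii) to (ii). The constant bookkeeping ($\lambda>1-\eps/6$, $\delta=(1-\lambda)/(2Mn)$) checks out, and the (iii) $\Rightarrow$ (i) step coincides with the paper's (ii) $\Rightarrow$ (i).
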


\begin{proof}
  (i) $\Rightarrow$ (iii). Let $\eps > 0$, $x \in X$
  with $\|x\| < 1$, and $E$ a finite co-dimensional subspace of
  $X$. Assume without
  loss of generality that $E$ does not contain $x$.
  Choose a finite dimensional subspace $F$ of $X$ which
  contains $x$ and with the property that $X = E \oplus F$.
  Let $P$ be a bounded  linear projection onto $F$.
  For $\eps/5>\delta>0$  put
  \begin{equation*}
    W = \{ w \in B_X \,: \|P(x-w)|| < \delta \}.
  \end{equation*}
  Note that
  $W$ is a neighbourhood of $x$ in the relative weak topology on
  $B_X$. Now, using (i), and that non-empty
  relatively weakly open subsets of $B_X$ has diameter 2, we may pick
  $w_1, w_2$ in $W$, both of norm $<1-\delta$ and with
  $\|w_2 - w_1\| > 2 - 3 \delta$.

  Put $y_i=w_i-Pw_i$.
  Then $y_1$ and $y_2$ are both in $E$.
  Moreover, for $i=1,2$, we have
  \begin{equation*}
    \|x + y_i\| = \|Px + w_i - Pw_i\| \le \|P(x-w_i)\| + \|w_i\| < 1.
  \end{equation*}
  We also have
  \begin{align*}
    \|y_1-y_2\|&=\|w_1-w_2 - P(w_1-w_2)\|\\
               &\geq
                 \|w_1-w_2\|-2\delta
                 >2-5\delta>2-\eps
  \end{align*}
  since $\|P(w_1-w_2)\| < 2\delta$.

 (iii) $\Rightarrow$ (ii). This is obvious as any finite dimensional
 subspace of a dual space has a finite co-dimensional pre-annihilator.

 (ii) $\Rightarrow$ (i). Let $\eps>0$ and $U$ a non-empty relatively
 weakly open subset of $B_X$.
 Let $x \in U$ with $\|x\| <1$
 and find a set of the form
 \begin{equation*}
   V = \left(
     x + \bigcap_{k=1}^{n}
     (f_k)^{-1}(-\delta,\delta)
   \right)
   \bigcap B_X \subset U,
 \end{equation*}
 where
 $(f_k)_{k=1}^{n} \subset S_{X^*}$ and $\delta > 0$.
 Let
 \begin{equation*}
   F =
   \spann \{ (f_k)_{k=1}^{n} \}.
 \end{equation*}
 As $F$ is of finite
 dimension in $X^*$, there exist $y_1, y_2\in F_\perp$ with
 $\|x + y_i\|<1$, $i=1,2$, such that
 $\|y_2-y_1\| > 2-\eps$.
 For $i=1,2$ we have $x + y_i\in V$
 with $\|(x+y_1)-(x+y_2)\| > 2 - \eps$.
\end{proof}

As a first application of Proposition~\ref{alterd2} let us give a very
simple proof of the following fact, known from \cite{BGLPRZ6}.

\begin{prop} If $X$ has the D2P and $Y$ is a subspace of $X$ with finite co-dimension, then $Y$ has the D2P.
\end{prop}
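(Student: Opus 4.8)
The plan is to sidestep slices and relatively weakly open subsets of $B_Y$ altogether and argue purely through the internal, ``finite-codimensional'' reformulation of the D2P furnished by Proposition~\ref{alterd2}, in particular condition (iii). The point is that condition (iii) refers only to the linear and metric structure of the space, which is inherited transparently by subspaces, whereas a direct argument with slices of $B_Y$ would require comparing functionals on $Y$ with functionals on $X$ via Hahn--Banach and would be more cumbersome.

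Concretely, let $Y$ be a subspace of $X$ with $\dim(X/Y)<\infty$, equipped with the norm inherited from $X$. To prove that $Y$ has the D2P I would invoke the equivalence (i)$\Leftrightarrow$(iii) of Proposition~\ref{alterd2}, applied now with $Y$ in the role of $X$ (the proposition is stated for an arbitrary Banach space, so this is legitimate). Thus it suffices to verify condition (iii) for $Y$: given $\eps>0$, given $x\in Y$ with $\|x\|<1$, and given a finite-codimensional subspace $E$ of $Y$, one must produce $y_1,y_2\in E$ with $\|x+y_i\|<1$ for $i=1,2$ and $\|y_1-y_2\|>2-\eps$.

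The only observation needed is that $E$ is finite-codimensional in $X$: indeed $\dim(X/E)\le \dim(X/Y)+\dim(Y/E)<\infty$. Since also $x\in X$ with $\|x\|<1$, and since $X$ has the D2P and hence satisfies condition (iii) of Proposition~\ref{alterd2}, there exist $y_1,y_2\in E$ with $\|x+y_i\|<1$ and $\|y_1-y_2\|>2-\eps$. As $E\subseteq Y$, these $y_1,y_2$ witness condition (iii) for $Y$, and the proof is complete.

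There is essentially no obstacle here: all the real work has already been done in Proposition~\ref{alterd2}, which converts the topological formulation of the D2P into one that is manifestly stable under passage to finite-codimensional subspaces. The only things to be careful about are the bookkeeping of codimensions and the (routine) remark that Proposition~\ref{alterd2} may be applied with $Y$ in place of $X$.
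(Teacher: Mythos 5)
Your proof is correct and is essentially identical to the paper's own argument: both verify condition (iii) of Proposition~\ref{alterd2} for $Y$ by observing that a finite co-dimensional subspace $E$ of $Y$ is finite co-dimensional in $X$ and then applying (iii) for $X$. The paper states this in one sentence; you merely spell out the codimension bookkeeping.
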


\begin{proof}
  If $y\in Y$ with $\|y\|<1$, $\eps>0$, and $E$ is of finite co-dimension
  in $Y$, then $E$ is also of finite
  co-dimension in $X$ and the result follows from
  Proposition~\ref{alterd2}~(iii).
\end{proof}

Now we return to the problem whether $\Xss$ can be strictly convex if
$X$ has the D2P.
\begin{defn}
  A Schauder basis $(e_k)_{k=1}^\infty$ for a Banach space $X$
  is \emph{bimonotone} if the projections
  \begin{equation*}
    P_{[n,m]}(\sum_{k=1}^\infty a_k e_k) = \sum_{k=n}^m a_k e_k.
  \end{equation*}
  satisfy $\|P_{[n,m]}\| = 1$ if $n \le m$.
\end{defn}

\begin{prop}\label{prop:D2Pcase_bsmono}
  Suppose $X$ has a bimonotone basis. Then, if $X$ has the
  D2P and $\varepsilon>0$, $S_{\Xss}$ contains a line segment of
  length $>1-\varepsilon$.
\end{prop}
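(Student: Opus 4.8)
The plan is as follows. It suffices to produce $z^{**}\in S_{\Xss}$ and $d^{**}\in\Xss$ with $\|d^{**}\|>1-\varepsilon/2$ and $\|z^{**}\pm d^{**}\|\le 1$: the function $t\mapsto\|z^{**}+td^{**}\|$ is convex on $[-1,1]$, is $\le 1$ there, and equals $1$ at the interior point $t=0$, hence is $\equiv 1$; so the segment $[z^{**}-d^{**},\,z^{**}+d^{**}]$ lies in $S_{\Xss}$ and has length $2\|d^{**}\|>2-\varepsilon>1-\varepsilon$. Write $(e_k)$ for the bimonotone basis; for $0\le n\le m$ let $P_{(n,m]}$ be the natural projection onto $\cspann\{e_k:n<k\le m\}$, and $Q_n=I-P_{(0,n]}$ the tail projection. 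Bimonotonicity gives $\|P_{(n,m]}\|=\|Q_n\|=1$, so the biadjoints $P_{(n,m]}^{**}$ are norm-one projections on $\Xss$; in particular $\|P_{(n,m]}^{**}x^{**}\|\le\|x^{**}\|$ for every $x^{**}$, and the coordinates $\langle x^{**},e_k^*\rangle$ of an element of $\Xss$ are well defined.

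First I would, using only that $B_X$ has diameter $2$ (a consequence of the D2P), pick $v^+,v^-\in B_X$ with $\|v^\pm\|<1$ and $\|v^+-v^-\|>2-\varepsilon$, and after a truncation fix $N_1$ and set $p^{\pm}=P_{(0,N_1]}v^{\pm}$, so $p^{\pm}$ are supported on $[1,N_1]$, $\|p^{\pm}\|<1$ (bimonotonicity), and $\|p^+-p^-\|$ is still close to $2$. These are the fixed ``seeds'' of two branches. Then, fixing positive parameters $\varepsilon_m\downarrow 0$ with $\sum_m\varepsilon_m$ small, I would construct inductively integers $N_1<N_2<\cdots$ and blocks $t_m$ supported on $(N_{m-1},N_m]$ such that, writing $A^{\pm}_m:=p^{\pm}+t_2+\cdots+t_m$, one has $\|A^{\pm}_m\|<1$ and $\|t_m\|>1-\varepsilon_m$ for every $m\ge 2$. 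Passing to a weak$^*$ cluster point $T^{**}$ of the bounded sequence $T_n:=t_2+\cdots+t_n$ (along a subnet, $A^{\pm}_n=p^{\pm}+T_n\to p^{\pm}+T^{**}$ weak$^*$), I would set
\[
z^{**}=\tfrac12(p^++p^-)+T^{**},\qquad d^{**}=\tfrac12(p^+-p^-).
\]

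To verify this works, weak$^*$ lower semicontinuity of the norm and $\|A^{\pm}_n\|<1$ give $\|p^{\pm}+T^{**}\|\le 1$, hence $\|z^{**}\pm d^{**}\|=\|p^{\pm}+T^{**}\|\le 1$ and $\|z^{**}\|\le\tfrac12(\|p^++T^{**}\|+\|p^-+T^{**}\|)\le 1$. For the matching lower bounds, note that the blocks stabilise: for each $m\ge 2$ the coordinates of $T_n$ on $(N_{m-1},N_m]$ coincide with those of $t_m$ once $n\ge m$, so $P^{**}_{(N_{m-1},N_m]}T^{**}=t_m$ and, since $\tfrac12(p^++p^-)$ lives on $[1,N_1]$, also $P^{**}_{(N_{m-1},N_m]}z^{**}=t_m$. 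As $\|P^{**}_{(N_{m-1},N_m]}\|=1$, this gives $\|z^{**}\|\ge\|t_m\|>1-\varepsilon_m$ for all $m$, so $\|z^{**}\|=1$; the same computation gives $\|p^{\pm}+T^{**}\|=1$. Finally the two endpoints $z^{**}\pm d^{**}=p^{\pm}+T^{**}$ are at distance $\|2d^{**}\|=\|p^+-p^-\|>2-\varepsilon$. Thus $[p^-+T^{**},\,p^++T^{**}]$ is a segment in $S_{\Xss}$ of length $>2-\varepsilon>1-\varepsilon$.

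The main obstacle is the inductive step: at stage $m$ one must find a \emph{single} block $t_m$, supported far out on $(N_{m-1},N_m]$ and with $\|t_m\|$ as close to $1$ as desired, that keeps \emph{both} branches inside the open unit ball, i.e. $\|A^+_{m-1}+t_m\|<1$ and $\|A^-_{m-1}+t_m\|<1$ at once — although $A^+_{m-1}$ and $A^-_{m-1}$ are far apart, their difference being the fixed vector $p^+-p^-$ of norm close to $2$. Proposition~\ref{alterd2} in its basic form controls enlargement of only one prescribed vector. The point is that $A^+_{m-1}$ and $A^-_{m-1}$ differ only inside the fixed finite block $[1,N_1]$ and coincide on $(N_1,N_{m-1}]$, so relative to the finite-rank, weak-to-norm continuous head projection $P_{(0,N_{m-1}]}$ they have the same ``tail behaviour''; applying the D2P to a relatively weakly open subset of $B_X$ singled out by this head projection (together with a norming functional for $p^+-p^-$) and then using bimonotonicity to transfer control of the head into control of $\|A^{\pm}_{m-1}+t_m\|$ should produce the required $t_m$, with the parameters $\varepsilon_m$ absorbing all truncation errors. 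This simultaneous control of the two branches is the technical heart of the argument, and it is exactly here that the bimonotone basis is genuinely used.
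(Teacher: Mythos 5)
Your reduction and verification steps (weak$^*$ lower semicontinuity for the upper bounds, block stabilization and the norm-one projections $P^{**}_{(N_{m-1},N_m]}$ for the lower bounds) are fine, but the proof has a genuine gap exactly where you yourself locate it: the inductive step is never carried out, and it does not follow from the D2P. Proposition~\ref{alterd2} (and more generally the D2P applied to a weakly open set) controls the enlargement of \emph{one} prescribed center: it produces far-supported $y$ with $\|s+y\|<1$ for a single $s$ in the open ball. You need a single block $t_m$, of norm close to $1$ and supported far out, with $\|A^{+}_{m-1}+t_m\|<1$ \emph{and} $\|A^{-}_{m-1}+t_m\|<1$ simultaneously, where $\|A^{+}_{m-1}-A^{-}_{m-1}\|>2-\eps$. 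Your suggested fix --- center the weakly open set at the common head via $P_{(0,N_{m-1}]}$ --- reduces, after the usual truncation errors, to finding $w\in B_X$ with $\|d+Qw\|<1$, $\|d-Qw\|<1$ and $\|Qw\|$ close to $1$, where $d=\tfrac12(p^{+}-p^{-})$ has norm close to $1$ and $Q$ is the tail projection. That is a statement about $d$ being (almost) the midpoint of two nearly diametral points of $B_X$, i.e.\ a quantitative failure of midpoint rotundity at $d$, localized in a tail --- a property of a completely different nature from the D2P, and nothing in the paper (or in your argument) supplies it. Note the warning example of \cite{AHNTT}: an MLUR space with the D2P, where every sphere point is strongly extreme, so such midpoint behaviour is at best delicate.

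It is also worth noticing what your construction would prove if the inductive step held: a segment of length $>2-\eps$ in $S_{\Xss}$ for every D2P space with a bimonotone basis. That is far stronger than the proposition and than what the paper obtains; the paper deliberately runs a \emph{one-branch} version of your induction (a single sum $s_m$ of blocks with $\|s_m\|<1$ and $\|x_i\|>1-\eps_i$, built from Proposition~\ref{alterd2} with the single center $s_k$), takes $z=w^{*}\text{-}\lim_{\mathcal U}s_m$, and uses the segment $[z-s_1,z]$ in the \emph{direction of the first block} $s_1$; bimonotonicity enters only to see $\|s_m-s_1\|\le\|s_m\|<1$, and the segment has length $\|s_1\|\approx 1$, not $\approx 2$. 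So either you must supply a new argument for the simultaneous two-branch step (which would be a stronger theorem than the one stated), or you should fall back to a one-center construction in the spirit above, in which case the segment you get has length only close to $1$ --- which is all the proposition asks for.
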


\begin{proof}
  Let $P_n$ be the natural projections associated to the basis $(e_i)$
  and put $Q_n=I-P_n$.
  Let $\eps > 0$ and define $\eps_n = \eps/2^{n+1}$.

  Define $s_1 = x_1 = \frac{(1-\varepsilon_1)e_1}{\|e_1\|}$.
  Assume that we have found a sequence $(x_i)_{i=1}^k$
  each with finite support $\supp(x_i) = [l_i,r_i]$
  such that if $s_k=\sum_{i=1}^k x_i$, then
  \begin{itemize}
  \item
    $\|s_k\|<1$ and $\|s_k\|\geq \|s_{k-1}\|$
  \item
    $\|x_i\| > 1-\varepsilon_i$ for $i=1,2,\ldots,k$.
  \item
    $r_i < l_{i+1}$ for $i=1,2,\ldots,k-1$.
  \end{itemize}

  Let us show how to find $x_{k+1}$.
  Let $E = Q_{r_k}(X)$ and use Proposition~\ref{alterd2} to
  find $y_1,y_2 \in E$ with $\|s_k + y_i\| < 1$ and
  $\|y_1-y_2\| > 2 - 2\varepsilon_{k+1}$.
  Without loss of generality
  $\|y_1\| \ge \|y_2\|$ and $y_1$ has finite support.
  Let $x_{k+1} = y_1$.
  Then $\|s_{k+1}\| = \|s_k + x_{k+1}\| < 1$
  and $\|s_k\| \le \|s_{k+1}\|$ since the basis is monotone.
  We also have
  \begin{equation*}
    2\|x_{k+1}\| = 2 \|y_1\| \ge \|y_1 - y_2\| > 2 - 2 \varepsilon_{k+1}
  \end{equation*}
  so $\|x_{k+1}\| > 1-\eps_{k+1}$.

  Let $\mathcal{U}$ be a non-trivial ultrafilter on $\mathbb{N}$.
  Then $z = w^\ast-\lim_{\mathcal{U}} s_m \in \Xss$ exists with
  $\|z\| \le 1$.
  For $\lambda \in [0,1]$, let
  \begin{equation*}
    z_\lambda = w^\ast-\lim_{\mathcal{U}} (s_m-\lambda s_1) =
    z - \lambda s_1.
  \end{equation*}
  We have $z_\lambda = \lambda z_1 + (1-\lambda) z_0$.
  Note that $\|s_m - s_1\| \le \|s_m\| < 1$
  since the basis is bimonotone.
  Hence $\|z_1\| \le 1$ and $\|z_0\| \le 1$,
  so the line segment $[z_0,z_1]$ is contained in $B_{\Xss}$.
  Let $R_n = P_{[l_n,r_n]}$ be the projection onto the support of
  $x_n$. We have $\|R_n\| = 1$ and
  \begin{equation*}
    R_n^{**} z_\lambda =
    w^\ast-\lim_{\mathcal{U}} R_n(s_m - \lambda s_1) = x_n
  \end{equation*}
  and hence $\|z_\lambda\| \ge \|R_n^{**} z_\lambda\| = \|x_n\| > 1-\eps_n$
  for all $n$ which means that $\|z_\lambda\| = 1$.
  Thus $z_\lambda = \lambda z_1 + (1-\lambda) z_0$, $\lambda \in
  [0,1]$, is a line segment on the sphere.
  The segment has length $\|z_0 - z_1\| = \|s_1\| > 1 - \varepsilon$.
\end{proof}

\section{The local diameter 2 property+}\label{sec:ld2p+}

Let us recall from the Introduction the definition of the
LD2P+.

\begin{defn}
  We say that a Banach space $X$ has the \emph{local diameter 2
    property+} (LD2P+) if for every $x^* \in S_{X^*}$, every
  $\eps > 0$, every $\delta > 0$, and every $x \in
  S(x^*,\eps) \cap S_X$ there exists $y \in S(x^*,\eps)$ with $\|x
  -y\| > 2-\delta$.
\end{defn}

From \cite[Theorem~1.4]{IK} and \cite[Open problem (7)
p.~95]{MR1856978} the following is known.

\begin{thm}\label{thm:ikw}
  Let $X$ be a Banach space. Then the following statements are
  equivalent.
  \begin{enumerate}
    \item\label{item:ikw-a}
      The equation $\|I - P\| = 2$ holds for every norm 1 rank 1
      projection $P$ on $X$.
    \item\label{item:ikw-b}
      For every $\eps>0$, every $x^* \in S_{X^*}$ and every
      $x \in S(x^*,\eps) \cap S_X$ there exists $y \in S(x^*,\eps)$
      with $\|x-y\| > 2-\eps.$
    \item\label{item:ikw-c}
      For every $x \in S_X$ and every $\eps >0$ we have $x
      \in \ncconv(\Delta_\eps(x))$, where $\Delta_\eps(x) = \{y \in
      B_X: \|x - y\| > 2 - \eps\}.$
  \end{enumerate}
\end{thm}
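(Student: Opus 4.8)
The plan is to prove \ref{item:ikw-b}$\Rightarrow$\ref{item:ikw-a} and \ref{item:ikw-b}$\Leftrightarrow$\ref{item:ikw-c} directly, and to take \ref{item:ikw-a}$\Rightarrow$\ref{item:ikw-b} from \cite[Theorem~1.4]{IK} (cf.\ also \cite[Open problem~(7), p.~95]{MR1856978}); the equivalence \ref{item:ikw-b}$\Leftrightarrow$\ref{item:ikw-c} is the ``$x$ in the slice'' localization of \ref{item:daugchar-c}$\Leftrightarrow$\ref{item:daugchar-e} in Theorem~\ref{thm:daugavet-char}. For \ref{item:ikw-b}$\Rightarrow$\ref{item:ikw-a}, write a norm-one rank-one projection as $P=x^{*}\otimes x_{0}$ with $x^{*}\in S_{X^{*}}$, $x_{0}\in S_{X}$ and $x^{*}(x_{0})=1$; since $\|I-P\|\le 2$ always, it is enough that, given $\eps>0$, \ref{item:ikw-b} applied to $x_{0}\in S(x^{*},\eps)\cap S_{X}$ furnishes $y\in S(x^{*},\eps)$ with $\|x_{0}-y\|>2-\eps$, so that $\|(I-P)y\|=\|y-x^{*}(y)x_{0}\|\ge\|y-x_{0}\|-(1-x^{*}(y))>2-2\eps$ while $\|y\|\le 1$, giving $\|I-P\|>2-2\eps$ and hence $\|I-P\|=2$. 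For \ref{item:ikw-c}$\Rightarrow$\ref{item:ikw-b}, fix $x^{*}\in S_{X^{*}}$, $\eps>0$, $x\in S(x^{*},\eps)\cap S_{X}$, put $\alpha=x^{*}(x)>1-\eps$ and pick $\gamma\in(0,\alpha-(1-\eps))$; by \ref{item:ikw-c} there is a convex combination $\sum_{i}\lambda_{i}y_{i}$ with each $y_{i}\in\Delta_{\eps}(x)$ and $\|x-\sum_{i}\lambda_{i}y_{i}\|<\gamma$, whence $\sum_{i}\lambda_{i}x^{*}(y_{i})>\alpha-\gamma>1-\eps$, and since $x^{*}(y_{i})\le 1$ for all $i$ some $y_{i_{0}}$ lies in $S(x^{*},\eps)$; it automatically satisfies $\|x-y_{i_{0}}\|>2-\eps$, so it is the point we want.

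For \ref{item:ikw-b}$\Rightarrow$\ref{item:ikw-c}, assume $x\in S_{X}$, $\eps>0$ and $x\notin C:=\ncconv(\Delta_{\eps}(x))$. Since $-x\in\Delta_{\eps}(x)\subseteq C$, Hahn--Banach gives $f\in S_{X^{*}}$ with $\sup_{C}f<f(x)$; here $f(x)>0$ (as $-x\in C$) and $\sup_{C}f<1$. The only delicate point is that $f$ need not nearly norm $x$, so it cannot be inserted into \ref{item:ikw-b} directly; the remedy is to tilt $f$ towards a norming functional of $x$. Pick $j\in S_{X^{*}}$ with $j(x)=1$ and set $h_{t}=(1-t)j+tf$; then $h_{t}(x)=1-t\bigl(1-f(x)\bigr)$ and $h_{t}(x)-\sup_{C}h_{t}\ge t\bigl(f(x)-\sup_{C}f\bigr)>0$, so for small $t$ the functional $x^{*}:=h_{t}/\|h_{t}\|\in S_{X^{*}}$ satisfies $x^{*}(x)>1-\eps$ and still $x^{*}(x)>\sup_{C}x^{*}$. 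Applying \ref{item:ikw-b} to $x$ inside a slice $S(x^{*},\eps')$ with $1-x^{*}(x)<\eps'\le\min\{\eps,\,1-\sup_{C}x^{*}\}$ produces $y\in S(x^{*},\eps')$ with $\|x-y\|>2-\eps'\ge 2-\eps$, so $y\in\Delta_{\eps}(x)\subseteq C$, contradicting $x^{*}(y)>1-\eps'\ge\sup_{C}x^{*}$. This establishes \ref{item:ikw-b}$\Leftrightarrow$\ref{item:ikw-c}.

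The step I expect to be the real obstacle is \ref{item:ikw-a}$\Rightarrow$\ref{item:ikw-b}. Expanding $\|I-P\|=\sup_{z\in B_{X}}\|z-x^{*}(z)x_{0}\|$ shows that \ref{item:ikw-a} only asserts that, for every $x^{*}\in S_{X^{*}}$ which norms some $x_{0}\in S_{X}$ and every $\delta>0$, the slice $S(x^{*},\delta)$ meets $\{z\in B_{X}:\|z-x_{0}\|>2-\delta\}$; promoting this from norm-attaining functionals together with their support points to an arbitrary functional and an arbitrary point lying in one of its slices is exactly \cite[Theorem~1.4]{IK}, and a naive Bishop--Phelps-type perturbation of $x^{*}$ is not enough because it moves the support point away from the prescribed $x$. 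I would therefore simply invoke that theorem. As a sanity check one also gets \ref{item:ikw-c}$\Rightarrow$\ref{item:ikw-a} at once: if $\|I-P\|=2-\theta<2$ for $P=x^{*}\otimes x_{0}$, then every $y$ with $\|y-x_{0}\|>2-\theta/2$ has $x^{*}(y)<1-\theta/2$, so $x^{*}$ separates $x_{0}$ from $\ncconv(\Delta_{\theta/2}(x_{0}))$, against \ref{item:ikw-c}.
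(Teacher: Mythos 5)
Your write-up is correct, but it is doing something different from the paper: the paper offers no proof of Theorem~\ref{thm:ikw} at all, simply recording that the equivalence is known from \cite[Theorem~1.4]{IK} and \cite[Open problem~(7), p.~95]{MR1856978}. You instead give direct arguments for \ref{item:ikw-b}$\Rightarrow$\ref{item:ikw-a}, \ref{item:ikw-b}$\Leftrightarrow$\ref{item:ikw-c} and \ref{item:ikw-c}$\Rightarrow$\ref{item:ikw-a}, and these all check out: the estimate $\|(I-P)y\|\ge\|y-x_0\|-(1-x^*(y))$ gives \ref{item:ikw-b}$\Rightarrow$\ref{item:ikw-a}; the averaging argument (max of $x^*(y_i)$ at least the convex mean) gives \ref{item:ikw-c}$\Rightarrow$\ref{item:ikw-b}; and in \ref{item:ikw-b}$\Rightarrow$\ref{item:ikw-c} your tilting of the separating functional $f$ toward a norming functional $j$ of $x$ is exactly the right repair for the fact that $f$ need not place $x$ in a slice, since $h_t(x)-\sup_C h_t\ge t\,(f(x)-\sup_C f)>0$ while $h_t(x)/\|h_t\|\to1$, so a suitable $\eps'$ exists and the resulting $y\in\Delta_\eps(x)\subseteq C$ violates the separation. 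You also correctly isolate the genuinely nontrivial implication \ref{item:ikw-a}$\Rightarrow$\ref{item:ikw-b} (passing from norm-attaining $x^*$ with its support point to an arbitrary $x$ in an arbitrary slice) and delegate it to the same source \cite{IK} the paper leans on, which is legitimate; note that the subslice Lemma~\ref{lem:subslice} alone does not bridge this gap, since the functional it produces still need not norm $x$. In short: the paper buys the whole theorem by citation, while your version makes three of the four implications self-contained at the cost of a page of elementary convexity and Hahn--Banach separation, with only the Ivakhno--Kadets implication left as a black box.
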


From Lemma \ref{lem:subslice} below, which is due to Ivakhno and Kadets  \cite[Lemma~2.1]{IK}, it is clear that the LD2P+ is
equivalent to the statements in Theorem \ref{thm:ikw}. Therefore
every Daugavet space has the LD2P+. Note, however, that the converse
is not true as the LD2P+ is stable by taking unconditional sums of
Banach spaces which fails for spaces with the Daugavet property (see
e.g. \cite[Corollary~3.1]{IK}).

\begin{lem}[Ivakhno and Kadets]\label{lem:subslice}
  Let $\eps>0$ and $x^* \in S_{X^*}$. Then for every $x \in S(x^*,\eps) \cap
  S_X$ and every positive $\delta < \eps$
  there exist $y^* \in S_{X^*}$ such that  $x \in S(y^*,\delta)$ and
  $S(y^*,\delta) \subset S(x^*,\eps).$
\end{lem}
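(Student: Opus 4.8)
The statement asks: given $\eps>0$, $\xs\in S_{\Xs}$, $x\in S(\xs,\eps)\cap S_X$, and $0<\delta<\eps$, to produce a norm-one functional $\ys$ with $x\in S(\ys,\delta)$ and $S(\ys,\delta)\subset S(\xs,\eps)$. The natural idea is to steer the new slicing functional so that $x$ sits deep inside the old slice: since $\xs(x)>1-\eps$ and $x$ is norm one, $x$ is already ``far from the edge'' of $S(\xs,\eps)$, and we want a direction $\ys$ along which a thin slice around $x$ cannot escape $S(\xs,\eps)$.

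First I would try the simplest candidate, a convex combination $\ys = (1-t)\xs + t\zs$ where $\zs\in S_{\Xs}$ is chosen with $\zs(x)=1$ (or $\zs(x)>1-\eta$ for small $\eta$, by Hahn--Banach / a norming functional for $x$), and $t\in(0,1)$ a parameter to be tuned. Then $\|\ys\|\le 1$, and since $\ys(x) = (1-t)\xs(x)+t\zs(x) > (1-t)(1-\eps)+t(1-\eta)$, for $t$ close enough to $1$ and $\eta$ small we get $\ys(x)>1-\delta$, hence $x\in S(\ys/\|\ys\|,\delta)$ after normalizing (note $\|\ys\|\ge\ys(x)$ is close to $1$, so normalizing only helps). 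For the inclusion, take any $y\in S(\ys,\delta)$, so $(1-t)\xs(y)+t\zs(y)>1-\delta$; since $\zs(y)\le 1$ this forces $(1-t)\xs(y) > 1-\delta-t$, i.e. $\xs(y) > 1-\frac{\delta}{1-t}$. So the requirement is $\frac{\delta}{1-t}\le\eps$, that is $1-t\ge\delta/\eps$; combined with the requirement from the first part that $t$ be large, we need the window $\delta/\eps \le 1-t$ and simultaneously $(1-t)(1-\eps) + t(1-\eta) > 1-\delta$. Rearranging the latter gives $t(\eps-\eta) > \eps-\delta$, i.e. $1-t < 1 - \frac{\eps-\delta}{\eps-\eta}$; since $\delta<\eps$, for $\eta$ small enough this upper bound on $1-t$ is strictly larger than $\delta/\eps$ (check: as $\eta\to 0$ it tends to $\delta/\eps$ from above... actually one must verify the inequality $1-\frac{\eps-\delta}{\eps-\eta} > \delta/\eps$ for small $\eta$, which is where a little care is needed).

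The delicate point — the ``hard part'' — is exactly this arithmetic: making sure the two constraints on $t$ are compatible. The clean fix is to not insist $\zs(x)=1$ but rather to allow a genuinely slack middle: pick $\zs$ norming for $x$ so $\zs(x)=1$ exactly (a support functional exists by Hahn--Banach since $\|x\|=1$), which is the $\eta=0$ case. Then the first constraint becomes $t(\eps) > \eps-\delta$, i.e. $1-t < \delta/\eps$, and the second is $1-t\ge\delta/\eps$ — these are incompatible at the boundary, so one must either use a strict version of one slice and a non-strict reading of the other, or replace $\delta$ by some $\delta'$ with $\delta<\delta'<\eps$ in the intermediate steps and exploit the strict inequalities $\xs(x)>1-\eps$ and $\zs(x)=1$ being strict where it matters. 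Concretely: since $\xs(x) > 1-\eps$, write $\xs(x) = 1-\eps+\rho$ with $\rho>0$; then the first constraint relaxes to $t(\eps-\rho) \ge \eps - \delta$ roughly, buying exactly the slack needed. I would carry this out by fixing $t = 1-\delta/\eps$ (so the inclusion holds with room to spare because of the strictness $\zs(y)\le 1$ versus the strict slice inequality), then verifying $\ys(x)/\|\ys\| > 1-\delta$ using $\xs(x)>1-\eps$ strictly. This is a short computation once the bookkeeping is organized, and no Banach-space machinery beyond the existence of a norming functional for $x$ is required.
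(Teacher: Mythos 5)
Your choice of construction (tilting $\xs$ toward a support functional $\zs$ of $x$) is the standard one -- the paper itself gives no proof, citing Ivakhno--Kadets -- but your verification has a genuine gap at the normalization step, and the concrete recipe ``fix $t=1-\delta/\eps$'' is not merely unjustified but false. Your inclusion estimate is carried out for the \emph{unnormalized} functional $\ys=(1-t)\xs+t\zs$ with threshold $1-\delta$: from $(1-t)\xs(y)+t\zs(y)>1-\delta$ and $\zs(y)\le 1$ you get $\xs(y)>1-\tfrac{\delta}{1-t}$. But the lemma demands $\ys\in S_{\Xs}$, and the $\delta$-slice of $\ys/\|\ys\|$ is $\{y\in B_X:\ys(y)>(1-\delta)\|\ys\|\}$, which is \emph{larger} than $\{y:\ys(y)>1-\delta\}$ whenever $\|\ys\|<1$; normalizing helps the membership of $x$ but hurts the inclusion. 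Redoing your estimate for the normalized slice, the inclusion needs $(1-\delta)\,\|(1-t)\xs+t\zs\|\ \ge\ 1-\eps+\eps t$, and with $t=1-\delta/\eps$ this forces $\|(1-t)\xs+t\zs\|\ge 1$, i.e.\ the combination must have norm exactly $1$, which in general it does not. This is not an artifact of loose estimates: in the Euclidean plane take $\xs=(1,0)$, $\eps=0.9$, $\delta=0.6$, $x=(0.11,\sqrt{1-0.11^2})$, $\zs=x$, $t=1/3$; then $\ys/\|\ys\|\approx(0.905,0.426)$, its $\delta$-slice contains the unit vector $(\cos 88^\circ,\sin 88^\circ)$ (inner product $\approx 0.46>0.4$) whose first coordinate $\approx 0.035\le 0.1$, so $S(\ys/\|\ys\|,\delta)\not\subset S(\xs,\eps)$ even though $x$ lies in the slice. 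No formula for $t$ that ignores $\|(1-t)\xs+t\zs\|$ can work.

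The missing idea is to calibrate $t$ against $N_t:=\|(1-t)\xs+t\zs\|$, for instance by the intermediate value theorem: the continuous function $t\mapsto(1-\delta)N_t-(1-\eps)-\eps t$ equals $\eps-\delta>0$ at $t=0$ and $-\delta<0$ at $t=1$, so some $t\in(0,1)$ satisfies $(1-\delta)N_t=1-\eps+\eps t$. With $\ys=\bigl((1-t)\xs+t\zs\bigr)/N_t$ one then gets: if $y\in S(\ys,\delta)$, then $(1-t)\xs(y)+t\zs(y)>(1-\delta)N_t=1-\eps+\eps t$, hence $\xs(y)>\bigl(1-\eps+\eps t-t\bigr)/(1-t)=1-\eps$; and since $\xs(x)>1-\eps$ strictly, $\ys(x)=\bigl((1-t)\xs(x)+t\bigr)/N_t>\bigl(1-\eps+\eps t\bigr)/N_t=1-\delta$. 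So the slack $\xs(x)-(1-\eps)>0$ that you tried to spend on reconciling two incompatible constraints on a fixed $t$ is in fact exactly what makes the membership strict once $t$ is chosen by this norm-dependent calibration; without some such step (or another way of handling $\|\ys\|<1$), the argument as written does not prove the lemma.
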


In the proof of Theorem~\ref{prop:ld2p+-char} below we will need
the following weak$^*$-version of Lemma \ref{lem:subslice}. Its proof
is more or less verbatim to that of Lemma \ref{lem:subslice} and will
therefore be omitted.

\begin{lem}\label{lem:w*subslice}
  Let $\eps>0$ and $x \in S_X$. Then for every $x^* \in S(x,\eps) \cap
  S_{X^*}$ which attains its norm and every positive $\delta < \eps$
  there exist $y \in S_X$ such that  $x^* \in S(y,\delta)$ and
  $S(y,\delta) \subset S(x,\eps).$
\end{lem}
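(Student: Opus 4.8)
The plan is to transcribe the Ivakhno--Kadets proof of Lemma~\ref{lem:subslice} under the obvious dictionary: interchange $X$ and $X^{*}$, and replace the norm topology of $X$ by the weak$^{*}$ topology of $X^{*}$. Under this dictionary the slice $S(x^{*},\eps)$ of $B_X$ becomes the slice $S(x,\eps)$ of $B_{X^{*}}$, the norm-closed convex set $\{z\in B_X:x^{*}(z)\le 1-\eps\}$ becomes the weak$^{*}$-closed (hence weak$^{*}$-compact) convex set $A:=\{w^{*}\in B_{X^{*}}:w^{*}(x)\le 1-\eps\}$, and the Hahn--Banach norming functional of the unit vector $x$ that the original proof uses is now replaced by a point $z_{0}\in S_X$ with $x^{*}(z_{0})=1$. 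The existence of such a $z_{0}$ is exactly the hypothesis that $x^{*}$ attains its norm; in Lemma~\ref{lem:subslice} the analogous statement (``$x\in S_X$ norms some element of $S_{X^{*}}$'') is automatic by Hahn--Banach, and this asymmetry is the only point where the two proofs genuinely differ.

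Concretely I would argue as follows. First dispose of the easy case: if $x^{*}(x)>1-\delta$, take $y=x$, since then $x^{*}\in S(x,\delta)\subseteq S(x,\eps)$. So assume $1-\eps<x^{*}(x)\le 1-\delta$. For a parameter $t\in(0,1)$ put $v_{t}=(1-t)x+tz_{0}\in B_X$ and $y=v_{t}/\|v_{t}\|\in S_X$. Then $x^{*}(y)=x^{*}(v_{t})/\|v_{t}\|\ge (1-t)x^{*}(x)+t$, which exceeds $1-\delta$ once $t>1-\delta/(1-x^{*}(x))$, so $x^{*}\in S(y,\delta)$ for such $t$. On the other hand, if $w^{*}\in A$ then $w^{*}(v_{t})=(1-t)w^{*}(x)+tw^{*}(z_{0})\le(1-t)(1-\eps)+t$, whence $w^{*}(y)\le\bigl((1-t)(1-\eps)+t\bigr)/\|v_{t}\|$; since $\|v_{t}\|\to\|x\|=1$ as $t\to0$, this is $\le 1-\delta$ for $t$ small, i.e.\ $w^{*}\notin S(y,\delta)$, so $S(y,\delta)\subseteq B_{X^{*}}\setminus A=S(x,\eps)$. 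The weak$^{*}$-compactness of $B_{X^{*}}$ is what makes the slice inclusion a statement that can be checked directly on $B_{X^{*}}$, so no passage to $X^{**}$ is needed; the case $A=\emptyset$ (only if $\eps>2$) is trivial.

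The main obstacle, and the point where the Ivakhno--Kadets computation does its real work, is reconciling the two constraints on $t$: the membership bound wants $t$ bounded below (so that the normalising factor $1/\|v_{t}\|$ lifts $x^{*}(y)$ past $1-\delta$), while the inclusion bound wants $t$ small (so that $1/\|v_{t}\|$ is close to $1$); closing the gap requires keeping quantitative track of $\|v_{t}\|=\|(1-t)x+tz_{0}\|$ — one uses here that $x^{*}(x)>1-\eps$ and $x^{*}(z_{0})=1$ force $x^{*}(v_{t})>1-(1-t)\eps$, hence $\|v_{t}\|>1-(1-t)\eps$ on the whole interval, which together with $\|v_t\|\to1$ keeps $\|v_t\|$ large enough — after which one checks that the admissible interval for $t$ is nonempty and picks any $t$ in it. I would simply reproduce this estimate from \cite{IK}; everything else (the trivial case, the normalisation to $S_X$, and the translation between ``$S(y,\delta)\subseteq S(x,\eps)$'' and ``$\sup_{w^{*}\in A}w^{*}(y)\le 1-\delta$'') is routine, which is why the paper is content to omit the proof.
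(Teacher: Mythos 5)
Your overall plan --- dualize the Ivakhno--Kadets argument, with a norm-attaining point $z_0\in S_X$, $x^*(z_0)=1$, playing the role that the Hahn--Banach norming functional plays in Lemma~\ref{lem:subslice} --- is exactly what the paper means by ``more or less verbatim'', and identifying norm attainment as the one genuinely new ingredient is right. But your account of the quantitative core has a real gap: the two constraints on $t$ cannot be reconciled by the estimates you cite. Write $c=x^*(x)\in(1-\eps,1]$ and $v_t=(1-t)x+tz_0$. Your membership bound (which uses $\|v_t\|\le 1$) needs $(1-t)(1-c)<\delta$, i.e.\ $t$ close to $1$ when $c$ is close to $1-\eps$; the inclusion needs $1-(1-t)\eps\le(1-\delta)\|v_t\|$, and if one feeds in the only lower bound you invoke, $\|v_t\|\ge x^*(v_t)=1-(1-t)(1-c)>1-(1-t)\eps$, the two sufficient conditions are simultaneously satisfiable only when $(1-c)(2-\delta)<\eps$, which fails precisely in the generic case $c$ near $1-\eps$ (for instance $\eps=1/2$, $\delta=0.4$, $c=0.51$ gives $(1-c)(2-\delta)=0.784>\eps$). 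Likewise ``$\|v_t\|\to1$ as $t\to0$'' only yields the inclusion for small $t$, where membership fails in the nontrivial case $c\le 1-\delta$. So ``check that the admissible interval for $t$ is nonempty'' would not go through as you describe it.

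The missing idea is to choose $t$ by continuity rather than by crude bounds. The function $g(t)=(1-\delta)\|v_t\|-\bigl(1-(1-t)\eps\bigr)$ is continuous on $[0,1]$ with $g(0)=\eps-\delta>0$ and $g(1)=-\delta<0$, so there is $t^*\in(0,1)$ with $(1-\delta)\|v_{t^*}\|=1-(1-t^*)\eps$. Put $y=v_{t^*}/\|v_{t^*}\|$. Then the inclusion is exact: if $w^*\in B_{X^*}$ satisfies $w^*(x)\le1-\eps$, then $w^*(v_{t^*})\le(1-t^*)(1-\eps)+t^*=1-(1-t^*)\eps=(1-\delta)\|v_{t^*}\|$, hence $w^*(y)\le1-\delta$ and $w^*\notin S(y,\delta)$; and membership holds because $x^*(v_{t^*})=1-(1-t^*)(1-c)>1-(1-t^*)\eps=(1-\delta)\|v_{t^*}\|$, which uses both $x^*(z_0)=1$ and $c>1-\eps$. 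This intermediate-value choice of the parameter (or an equivalent exact bookkeeping of $\|v_t\|$) is the actual content of the Ivakhno--Kadets computation you defer to; with only the bounds you state, the argument fails.
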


We will now add to the list of statements in Theorem \ref{thm:ikw}
statements similar to \ref{item:daugchar-b} and \ref{item:daugchar-d}
in Theorem \ref{thm:daugavet-char}. As pointed out in
\cite[p.~232]{AHNTT} the equivalence of
\ref{item:l2+-a} and \ref{item:l2+-b} in Theorem~\ref{prop:ld2p+-char}
below can be proved by a similar argument to the proof of
\cite[Lemma~1.5]{MR1621757}.

\begin{thm}\label{prop:ld2p+-char}
  Let $X$ be a Banach space. Then the following statements are equivalent:
  \begin{enumerate}
    \item\label{item:l2+-a}
      $X$ has the LD2P+.
    \item\label{item:l2+-b}
      For every $x \in S_X$, every $\eps > 0$, every
      $\delta > 0$, and every $x^* \in S(x,\eps) \cap S_{X^*}$ there
      exists $y^* \in S(x,\eps)$ with $\|x^* -y^*\| > 2-\delta$.
    \item\label{item:l2+-c}
      The equation $\|I - P\|= 1 + \|P\|$ holds for every weakly
      compact projection $P$ on $X$.
  \end{enumerate}
\end{thm}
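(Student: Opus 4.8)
The plan is to prove the three implications $\ref{item:l2+-a} \Leftrightarrow \ref{item:l2+-b}$ and $\ref{item:l2+-a} \Leftrightarrow \ref{item:l2+-c}$ in a cycle, exploiting the duality structure in much the same way as the proof of the Daugavet characterization in Theorem~\ref{thm:daugavet-char}. For $\ref{item:l2+-a} \Rightarrow \ref{item:l2+-b}$, the idea is as follows. Fix $x \in S_X$, $\eps > 0$, $\delta > 0$ and $x^* \in S(x,\eps) \cap S_{X^*}$. Replacing $\eps$ by a slightly smaller $\eps'$ if necessary (so that $x^*(x) > 1 - \eps'$ still holds), use a perturbation/Bishop--Phelps-type argument to reduce to the case that $x^*$ attains its norm, then apply Lemma~\ref{lem:w*subslice} to find $y \in S_X$ with $x^* \in S(y,\delta')$ and $S(y,\delta') \subset S(x,\eps)$ for small $\delta'$. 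Now use the LD2P+ (in the form of Theorem~\ref{thm:ikw}\ref{item:ikw-c}, i.e.\ $y \in \ncconv(\Delta_{\delta'}(y))$) together with a weak$^*$ compactness argument in $B_{X^*}$ to produce a functional $z^*$ that is weak$^*$-close to $x^*$ on $y$ (hence lies in $S(y,\delta')\subset S(x,\eps)$) and has $\|x^* - z^*\|$ near $2$; the point is that elements $w^*\in S_{X^*}$ with $w^*(y)$ near $-1$ satisfy $\|x^* - w^*\|$ near $2$, and averaging such functionals against those near $x^*$ keeps one coordinate near $1$ while pushing the norm of the difference up. The reverse implication $\ref{item:l2+-b}\Rightarrow\ref{item:l2+-a}$ is the same argument with the roles of $X$ and $X^*$ swapped, using Lemma~\ref{lem:subslice} instead of Lemma~\ref{lem:w*subslice}; this direction is cleaner because no norm-attainment hypothesis is needed.

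For the operator reformulation, I would prove $\ref{item:l2+-c} \Rightarrow \ref{item:l2+-a}$ and $\ref{item:l2+-a} \Rightarrow \ref{item:l2+-c}$. The implication $\ref{item:l2+-c}\Rightarrow\ref{item:l2+-a}$ is the easy half: a norm-one rank-one projection $P$ has the form $P = x^* \otimes x$ with $x^*(x) = 1$, $\|x^*\| = \|x\| = 1$, it is weakly compact, and $\|I - P\| = 1 + \|P\| = 2$ is exactly condition \ref{item:ikw-a} of Theorem~\ref{thm:ikw}, which by the paragraph following that theorem is equivalent to the LD2P+. For $\ref{item:l2+-a}\Rightarrow\ref{item:l2+-c}$, let $P$ be a weakly compact projection; I may assume $P\neq 0$, so $\|P\| \ge 1$ and it suffices to show $\|I - P\| \ge 1 + \|P\|$. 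Pick $x \in S_X$ with $\|Px\|$ close to $\|P\|$ and set $z = Px/\|Px\|$; choose $x^* \in S_{X^*}$ with $x^*(z)$ close to $1$. Using the LD2P+ (via \ref{item:l2+-b}, the functional form just established) applied to $z$ and $x^*$, find $y^* \in S(z,\eps)$ with $\|x^* - y^*\|$ near $2$, equivalently $y^*(z)$ near $1$ while $-y^*$ also nearly attains its norm at $z$. The standard trick is then to consider the image of $z$ (or of a nearby unit vector on which $P$ is large) under $I - P$ and estimate its norm from below by testing against a suitable functional built from $y^*$: since $P$ fixes $z$ up to scaling, $(I-P)$ applied to an appropriate vector picks up a contribution of size $\approx \|P\|$ from the $P$-part and $\approx 1$ from a direction on which $z^*$ is small, giving $\|(I-P)(\cdot)\| \gtrsim 1 + \|P\|$; here one uses that weak compactness of $P$ lets the relevant suprema over the unit ball be (almost) attained.

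The main obstacle will be the technical bookkeeping in $\ref{item:l2+-a}\Rightarrow\ref{item:l2+-c}$: making the vector-and-functional construction precise so that the $\|P\|$-contribution and the $1$-contribution genuinely add rather than interfere, and handling the case $\|P\|=1$ versus $\|P\|>1$ uniformly. The weak compactness of $P$ is essential for replacing ``supremum over $B_X$'' by ``value at a concrete point'' at the right moments (this is the analogue of why \ref{item:daugchar-b} needs weak compactness in Theorem~\ref{thm:daugavet-char}); a clean way to organize it is to follow the structure of \cite[Lemma~1.5]{MR1621757} as the authors suggest for the $\ref{item:l2+-a}\Leftrightarrow\ref{item:l2+-b}$ part, and to adapt the Daugavet-operator argument of \cite{MR1856978} or \cite{IK} for the projection version, taking care that the norm-one rank-one case of Theorem~\ref{thm:ikw} is what anchors the whole equivalence. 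The norm-attainment hypothesis in Lemma~\ref{lem:w*subslice} is a minor wrinkle: it is dealt with by a routine Bishop--Phelps perturbation at the cost of shrinking $\eps$ slightly, which is harmless since the conclusion of \ref{item:l2+-b} allows an arbitrarily small loss $\delta$.
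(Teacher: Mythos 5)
Your overall architecture matches the paper's proof (Bishop--Phelps reduction plus Lemma~\ref{lem:w*subslice} for \ref{item:l2+-a}$\Rightarrow$\ref{item:l2+-b}, the symmetric argument with Lemma~\ref{lem:subslice} for the converse, the easy deduction of \ref{item:l2+-a} from \ref{item:l2+-c} via Theorem~\ref{thm:ikw}\ref{item:ikw-a}, and an adaptation of \cite[Theorem~2.3]{MR1621757} for \ref{item:l2+-a}$\Rightarrow$\ref{item:l2+-c}, which the paper itself only cites). But the mechanism you describe for the central step of \ref{item:l2+-a}$\Rightarrow$\ref{item:l2+-b} has a genuine gap. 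After producing $y\in S_X$ with $x^*\in S(y,\eta)$ and $S(y,\eta)\subset S(x,\eps)$, you propose to find the far functional by noting that any $w^*$ with $w^*(y)$ near $-1$ is far from $x^*$, and then ``averaging such functionals against those near $x^*$'' to keep the value at $y$ near $1$. This cannot work: membership in $S(y,\eta)$ forces $y^*(y)>1-\eta$, while $x^*(y)>1-\eta$ as well, so the point $y$ can never witness that $\|x^*-y^*\|$ is close to $2$; and a convex combination of a functional with value $\approx-1$ at $y$ and functionals with value $\approx 1$ at $y$ either leaves the slice $S(y,\eta)$ or stays norm-close to $x^*$. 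The two requirements are in tension exactly at $y$, which is why the witness must be a \emph{different} point of $B_X$.

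The missing idea is to apply the LD2P+ on the $X$-side to the slice $S(x^*,\eta)$ of $B_X$, which contains $y$ (since $x^*(y)>1-\eta$): this yields $z\in S(x^*,\eta)$ with $\|y-z\|>2-\eta$. A norming functional $y^*$ for $y-z$ then satisfies $y^*(y)>1-\eta$, hence $y^*\in S(y,\eta)\subset S(x,\eps)$, and $y^*(z)<-(1-\eta)$, so that evaluating at $z$ (where also $x^*(z)>1-\eta$) gives $\|x^*-y^*\|\ge x^*(z)-y^*(z)>2-2\eta>2-\delta$. Your alternative route via Theorem~\ref{thm:ikw}\ref{item:ikw-c} could be repaired (write $y$ as an almost-convex combination of points $z_i\in\Delta_{\eta}(y)$ and extract one index with $x^*(z_i)$ close to $1$ by averaging the \emph{values} $x^*(z_i)$, then norm $y-z_i$), but as written the averaging is applied to the wrong objects. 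On \ref{item:l2+-a}$\Rightarrow$\ref{item:l2+-c}, note also that $(I-P)z=0$ for $z=Px/\|Px\|$, so ``testing the image of $z$ under $I-P$'' gives nothing; the actual adaptation of \cite[Theorem~2.3]{MR1621757} needs the dentability of the weakly compact set $\overline{P(B_X)}$ (a small-diameter slice containing a point of norm close to $\|P\|$) before the LD2P+ is invoked, which your sketch gestures at but does not supply. Since the paper likewise defers this implication to \cite{MR1621757}, the decisive defect of your proposal is the slice/witness confusion in \ref{item:l2+-a}$\Rightarrow$\ref{item:l2+-b}.
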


\begin{proof}
  \ref{item:l2+-a} $\Rightarrow$ \ref{item:l2+-b}. By the Bishop-Phelps
  theorem we can assume without loss of generality that $x^* \in
  S(x,\eps) \cap S_{X^*}$ attains its norm. Let $0 < \eta <
  \min\{\eps,\delta/2\}$ and find by Lemma  \ref{lem:w*subslice} $y \in
  S_X$ such that $x^* \in S(y,\eta)$ and $S(y,\eta) \subset
  S(x,\eps)$. Note that $y \in S(x^*,\eta)$ and thus,  since $X$ has
  the LD2P+, we can find $z \in S(x^*,\eta)$ such that  $\|y - z\| >
  2-\eta.$ Hence there is $y^* \in S_{X^*}$ such that \[y(y^*)-z(y^*)
  = (y-z)(y^*) > 2 - \eta.\] From this we have $y(y^*) > 1-\eta$ and
  $-z(y^*) > 1 - \eta$. It follows that $y^* \in S(x,\eps)$ as $S(y,\eta) \subset
 S(x,\eps)$. Moreover, using that $z \in S(x^*,\eta)$, we have
 \begin{align*}
   \|x^* - y^*\| &\ge (x^*-y^*)(z) \\
                 &= x^*(z) - y^*(z) \\
                 & > 1 -  \eta + 1 - \eta > 2 - \delta.
 \end{align*}

 \ref{item:l2+-b} $\Rightarrow$ \ref{item:l2+-a}. The proof is
 identical to the proof of the converse except
 that one does not use the Bishop-Phelps theorem and that
 one uses Lemma \ref{lem:subslice} in place of Lemma
 \ref{lem:w*subslice}.

 \ref{item:l2+-a} $\Rightarrow$ \ref{item:l2+-c}.
 The proof is similar to that of
 \cite[Theorem~2.3]{MR1621757}.

 \ref{item:l2+-c} $\Rightarrow$ \ref{item:l2+-a}.
 This is clear as \ref{item:l2+-c} trivially implies \ref{item:ikw-a}
 in Theorem \ref{thm:ikw}.
\end{proof}

Note that from Theorem \ref{prop:ld2p+-char} we get

\begin{cor}\label{cor:m-emb-ld2p+}
  If $B_{X^{\ast}}$ contains a weak$^\ast$-denting point, in particular if $X^\ast$ has the RNP, then $X$ does not have the LD2P+.
\end{cor}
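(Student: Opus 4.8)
The plan is to confront the defining property of a \wast-denting point with statement \ref{item:l2+-b} of Theorem~\ref{prop:ld2p+-char}. Recall first that $\xs\in B_{\Xs}$ is a \wast-denting point if and only if for every $\eta>0$ there is a \wast-slice $S(x,\eps)=\{\zs\in B_{\Xs}:\zs(x)>1-\eps\}$, with $x\in S_X$ and $\eps>0$, such that $\xs\in S(x,\eps)$ and $\diam S(x,\eps)<\eta$; this equivalence is the usual Hahn--Banach separation argument performed in $(\Xs,\ws)$, whose (topological) dual is $X$. Moreover such an $\xs$ is an extreme point of $B_{\Xs}$, hence $\|\xs\|=1$. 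So, given a \wast-denting point $\xs$ of $B_{\Xs}$, I would fix $x\in S_X$ and $\eps>0$ with $\xs\in S(x,\eps)\cap S_{\Xs}$ and $d:=\diam S(x,\eps)<2$.

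Now suppose, towards a contradiction, that $X$ has the LD2P+. Applying Theorem~\ref{prop:ld2p+-char}~\ref{item:l2+-b} to the point $\xs\in S(x,\eps)\cap S_{\Xs}$, with this $x$, this $\eps$, and any $\delta$ satisfying $0<\delta<2-d$, we obtain $\ys\in S(x,\eps)$ with $\|\xs-\ys\|>2-\delta>d$. But $\xs$ and $\ys$ both lie in $S(x,\eps)$, so $\|\xs-\ys\|\le\diam S(x,\eps)=d$, a contradiction. Hence $X$ does not have the LD2P+.

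For the ``in particular'' clause I would invoke the standard fact that a dual unit ball with the RNP possesses \wast-denting points: when $\Xs$ has the RNP (equivalently, $X$ is an Asplund space), every \wast-compact convex subset of $\Xs$, in particular $B_{\Xs}$, is the \wast-closed convex hull of its \wast-strongly exposed points, and a \wast-strongly exposed point is in particular a \wast-denting point; so the first part of the corollary applies. There is essentially no obstacle in this argument: once Theorem~\ref{prop:ld2p+-char} is available it is a one-line deduction. The only care needed is the (routine) equivalence between being a \wast-denting point and admitting arbitrarily small \wast-slices through that point, together with locating a precise reference for the existence of \wast-denting points of a dual ball with the RNP.
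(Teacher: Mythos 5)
Your argument is correct and is exactly the deduction the paper intends: the paper states the corollary as an immediate consequence of Theorem~\ref{prop:ld2p+-char}, and your use of statement \ref{item:l2+-b} of that theorem against a small-diameter \weakstar-slice through the denting point (plus the standard fact that dual balls with the RNP have \weakstar-strongly exposed, hence \weakstar-denting, points) is precisely the omitted one-line proof.
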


It is known that if $\Xss$ is smooth, then $X^*$ has
the RNP (see e.g. \cite{Sul77}), hence we have the following corollary.

\begin{cor}\label{cor:ld2p+no-smooth-bidual}
  If $X$ has the LD2P+, then $\Xss$ is not smooth.
\end{cor}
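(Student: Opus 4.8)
The plan is to chain together two facts that are essentially already in place. The statement to prove is Corollary~\ref{cor:ld2p+no-smooth-bidual}: if $X$ has the LD2P+, then $\Xss$ is not smooth. The obvious route is: (1) the LD2P+ forces a weak$^\ast$-denting point to be absent from $B_{\Xs}$, which is precisely the content of Corollary~\ref{cor:m-emb-ld2p+}; and (2) if $\Xss$ is smooth, then $\Xs$ has the Radon--Nikod\'ym property (this is the cited fact, from Sullivan \cite{Sul77}). Combining, smoothness of $\Xss$ would give $\Xs$ the RNP, hence $B_{\Xs}$ would have denting points, and \emph{a fortiori} weak$^\ast$-denting points (since for a dual ball every denting point of the norm topology that one can locate via small-diameter slices can be upgraded, or more simply one invokes that RNP of $\Xs$ yields weak$^\ast$-denting points of $B_{\Xs}$ directly). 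Either way this contradicts the LD2P+ via Corollary~\ref{cor:m-emb-ld2p+}. So the proof is a two-line deduction once those two ingredients are cited.

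Concretely I would write: Suppose, for contradiction, that $\Xss$ is smooth. By \cite{Sul77} this implies $\Xs$ has the RNP. It is classical (Phelps; see also the theory of the RNP) that if a dual space $\Xs$ has the RNP then $B_{\Xs}$ has weak$^\ast$-denting points — indeed $B_{\Xs}$ is then the closed convex hull of its weak$^\ast$-strongly exposed points. Hence $B_{\Xs}$ contains a weak$^\ast$-denting point, and Corollary~\ref{cor:m-emb-ld2p+} tells us $X$ does not have the LD2P+, contradicting the hypothesis. Therefore $\Xss$ is not smooth.

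The only step requiring care is the link ``$\Xs$ has RNP $\Rightarrow$ $B_{\Xs}$ has a weak$^\ast$-denting point.'' For a \emph{general} Banach space with RNP one gets denting points of the unit ball, but here the extra structure (it is a dual ball) is what allows these to be taken weak$^\ast$-denting; this is standard RNP theory but I would make sure to cite it rather than reprove it. In fact, the cleanest citation path is that $\Xss$ smooth already implies, more directly, that every point of $S_{\Xs}$ that norm-attains is a weak$^\ast$-denting point of $B_{\Xs}$ (smoothness of $\Xss$ means each functional in $S_{\Xss}$ has a unique norm-attaining point of $B_{\Xsss}$, which pushed down says each norm-attaining $x^\ast \in S_{\Xs}$ is exposed in a strong enough sense); combined with Bishop--Phelps this gives weak$^\ast$-denting points of $B_{\Xs}$. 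I expect the main (minor) obstacle to be choosing which of these equivalent formulations to cite so that the one-line deduction is airtight; the rest is immediate from Corollary~\ref{cor:m-emb-ld2p+}.

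Thus the whole argument is:

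\begin{proof}
  Suppose $\Xss$ is smooth. Then $\Xs$ has the RNP (see
  e.g.~\cite{Sul77}), and hence $B_{\Xs}$ is the closed convex hull of
  its weak$^\ast$-strongly exposed points; in particular $B_{\Xs}$
  contains a weak$^\ast$-denting point. By
  Corollary~\ref{cor:m-emb-ld2p+}, $X$ does not have the LD2P+. This
  proves the contrapositive.
\end{proof}
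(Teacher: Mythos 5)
Your proof is correct and is essentially the paper's own argument: the paper also deduces from \cite{Sul77} that smoothness of $\Xss$ gives $\Xs$ the RNP and then applies Corollary~\ref{cor:m-emb-ld2p+}, whose statement (``in particular if $X^\ast$ has the RNP'') already contains the RNP-to-weak$^\ast$-denting-point step you spell out. So your extra discussion of that link is fine but unnecessary -- you could simply cite the corollary as stated.
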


It is known that all the diameter 2 properties in Definition \ref{defn:diam2p} as
well as the Daugavet
property are inherited by certain subspaces called ai-ideals (see
\cite{ALN2} and \cite{A3}). We will end this section by showing that
this is true for the LD2P+ as well.

A subspace $X$ of a Banach space $Y$ is called
an \emph{ideal} in $Y$ if there exists a norm 1 projection $P$ on $Y^*$  with
$\ker P = X^\perp $. $X$ being an ideal in $Y$ is in turn equivalent
to $X$ being locally 1-complemented in $Y$, i.e., for every $\eps>0$ and
every finite dimensional subspace $E\subset Y$ there exists a linear $T:E\to X$
such that
\begin{itemize}
  \item [(i)] $Te=e$ for all $e\in X\cap E$.
  \item [(ii)] $\|Te\|\leq (1+\eps)\|e\|$ for all $e\in E$.
\end{itemize}

Following \cite{ALN2} a subspace $X$ of a Banach space $Y$ is called
an \emph{almost isometric ideal (ai-ideal)} in $Y$ if $X$ is locally 1-complemented
with almost isometric local projections, i.e., for every $\eps>0$ and
every finite-dimensional subspace $E\subset Y$ there exists $T:E\to X$
which satisfies a) and
\begin{itemize}
  \item [(ii')] $(1-\eps)\|e\|\leq\|Te\|\leq (1+\eps)\|e\|$ for all $e\in E$.
\end{itemize}

Note that an ideal $X$ in $Y$ is an ai-ideal if $P(Y^*)$ is a 1-norming subspace of
$Y^\ast$ \cite[Proposition~2.1]{ALN2}. Ideals $X$ in $Y$ for which $P(Y^*)$ is a 1-norming
subspace for $Y$ are called \emph{strict ideals}. An ai-ideal is, however,
not necessarily strict (see \cite[Example~1]{ALN2} and \cite[Remark~3.2]{ALLN}).

\begin{prop} \label{prop:ld2p+-ai-ideal}
  Let $Y$ have the LD2P+ and assume $X$ is an ai-ideal in $Y$. Then $X$
  has the LD2P+.
\end{prop}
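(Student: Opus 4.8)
The plan is to verify the LD2P+ for $X$ using the characterization via slices together with the local $(1-\eps)$-complementation of the ai-ideal. Fix $\xs \in S_{\Xs}$, $\eps > 0$, $\delta > 0$, and $x \in S(\xs,\eps) \cap S_X$. The goal is to produce $y \in B_X$ with $\xs(y) > 1-\eps$ and $\|x - y\| > 2 - \delta$. The idea is to pull the data up to $Y$, use that $Y$ has the LD2P+ to get a point far from $x$ inside a slice of $B_Y$, and then push that point back down to $X$ via an almost isometric local projection $T \colon E \to X$ on a suitably chosen finite-dimensional $E \subset Y$.

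First I would set up the transfer of the functional. Since $X$ is an ideal in $Y$ with associated norm-one projection $P$ on $Y^\ast$ with $\ker P = X^\perp$, there is $\ys \in S_{\Ys}$ extending $\xs$ (indeed any Hahn--Banach extension works, or one uses $P$). Then $\ys(x) = \xs(x) > 1 - \eps$, so $x \in S(\ys, \eps) \cap S_Y$. Applying the LD2P+ of $Y$ with a parameter $\delta' < \delta$ chosen small (its precise size to be calibrated against $\eps$ at the end), we obtain $v \in B_Y$ with $\ys(v) > 1 - \eps$ and $\|x - v\| > 2 - \delta'$. Now let $E = \spann\{x, v\} \subset Y$, a space of dimension at most $2$, and let $\eta > 0$ be small (again to be fixed later). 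Choose $T \colon E \to X$ with $Te = e$ for $e \in X \cap E$ (so $Tx = x$, since $x \in X$) and $(1-\eta)\|e\| \le \|Te\| \le (1+\eta)\|e\|$ for all $e \in E$. Put $w = Tv \in X$. Then $\|w\| \le 1 + \eta$, and $\|x - w\| = \|T(x - v)\| \ge (1-\eta)\|x - v\| > (1-\eta)(2 - \delta')$, which is $> 2 - \delta$ once $\eta, \delta'$ are small enough.

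The one genuinely delicate point — and the step I expect to be the main obstacle — is controlling the functional value $\xs(w)$, i.e.\ ensuring $w$ lands (nearly) in the right slice of $B_X$, because $T$ need not interact well with $\xs$ a priori. The fix is to enlarge the finite-dimensional space $E$: take $E = \spann\{x, v\}$ but arrange, using that $X$ is an \emph{ai}-ideal (so $P(\Ys)$ is $1$-norming for $Y$, or directly from the local structure), that the local projection $T$ can be taken to approximately respect $\ys$ on $E$; concretely one wants $|\ys(e) - \xs(Te)|$ small for $e \in E$. This is achieved by the standard device of also requiring the local operator to act suitably on a norming functional, or, more simply, by noting $w \in X$ so $\xs(w) = \ys(w)$ once we have chosen $\ys$ to extend $\xs$, and then estimating $\ys(w) = \ys(Tv) \ge \ys(v) - \|\ys\|\,\|Tv - v\|$ after observing that $\|Tv - v\|$ can be made small: indeed $v \in E$ and $Tv - v$ need not vanish, but enlarging $E$ to contain more of a nearby subspace and using (ii') forces $T$ close to the identity on any fixed finite set up to $\eta$ — more precisely, since $T$ is $(1+\eta)$-bounded and fixes $X \cap E$, a perturbation argument gives $\|Tv - v\| \le C\eta$ when $v$ is within $\eta$ of $X$; in general one absorbs this by choosing $\ys$ as a Hahn--Banach extension and working with $\|w\|^{-1} w$ to normalize. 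After these calibrations, $y := w/\max\{1,\|w\|\}$ satisfies $\xs(y) > 1 - \eps$ and $\|x - y\| > 2 - \delta$, completing the verification; the bookkeeping of the constants $\eta, \delta'$ in terms of $\eps, \delta$ is routine and I would relegate it to a single line at the end.
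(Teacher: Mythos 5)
Your reduction breaks down exactly at the point you yourself flag as delicate, and none of the proposed fixes closes it. The almost isometric local projection $T\colon E\to X$ furnished by the ai-ideal property controls only the norms of vectors of $E$ and fixes $E\cap X$; it gives no control whatsoever over the value $x^*(Tv)=y^*(Tv)$. In particular $\|Tv-v\|$ need not be small: the point $v$ produced by the LD2P+ of $Y$ lies in $B_Y$ and nothing in the construction places it anywhere near $X$, so the asserted estimate ``$\|Tv-v\|\le C\eta$'' has no basis --- $T$ is close to the identity only on $E\cap X$ (or on vectors at small distance from $X$), not on an arbitrary fixed finite subset of $Y$. Renormalizing $w=Tv$, or choosing a different Hahn--Banach extension $y^*$ of $x^*$, does not help either, because the difficulty is that $y^*(Tv)$ can be small (even negative) while $y^*(v)>1-\eps$. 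Nor can you appeal to a $1$-norming ideal projection to build local projections compatible with a prescribed functional: as the paper points out, an ai-ideal need not be a strict ideal, and the definition of ai-ideal you are working with contains no hypothesis linking $T$ to any functional. So the slice membership $x^*(y)>1-\eps$ of your candidate $y$ is unverifiable, and this is a genuine gap, not bookkeeping.

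The way around it --- and the route the paper takes --- is to use the functional-free characterization of the LD2P+ in Theorem~\ref{thm:ikw}: $X$ has the LD2P+ if and only if every $x\in S_X$ lies in $\ncconv\bigl(\Delta_\eps(x)\bigr)$, where $\Delta_\eps(x)=\{y\in B_X:\|x-y\|>2-\eps\}$. In that formulation the only quantities to transport through $T$ are norms of differences of finitely many vectors. Concretely: by the LD2P+ of $Y$ choose a convex combination $y=\sum_{n=1}^N\lambda_n y_n$ with $y_n\in B_Y$, $\|x-y_n\|>2-\beta$ and $\|x-y\|<\gamma$; put $E=\spann\{y_1,\dots,y_N,x\}$, take a $(1+\eta)$-isometric local projection $T\colon E\to X$ (so $Tx=x$), and set $z_n=Ty_n/\|Ty_n\|$. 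The estimates $\|x-z_n\|>2-\eps$ and $\|x-\sum_n\lambda_n z_n\|<\alpha$ then follow from property (ii') alone, and no functional ever needs to be matched. Your distance estimate $\|x-Tv\|\ge(1-\eta)(2-\delta')$ is the correct ingredient; the missing idea is to replace ``far point inside the same slice'' by ``$x$ is approximated by convex combinations of far points'', which is exactly what Theorem~\ref{thm:ikw} licenses.
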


\begin{proof} For $\delta>0$, $Z$ a subspace of $Y$, and $x \in S_Z$ put
  \[\Delta_\delta^Z(x) = \{y \in B_Z: \|x-y\| > 2- \delta\}.\]

  Let $x\in S_X$, $\eps >0$, and $\alpha >0$. We will show that there
  exists $z \in \mbox{conv}\Delta_\eps^X(x)$ with $\|x-z\| <
  \alpha$. The result will then follow from
  Theorem~\ref{thm:ikw}~\ref{item:ikw-c}. First, since $Y$
  enjoys the LD2P+, we know that for any positive $\beta < \eps$ and
  any positive $\gamma < \alpha$ we can find
  $y=\sum_{n=1}^N\lambda_ny_n$ with
  $(y_n)_{n=1}^N \subset \Delta_{\beta}^Y(x)$
  such that $\|x-y\| < \gamma$. Now
  let $E=\spann\{y_1, \ldots, y_N, x\}$ and pick a
  local projection $T:E\to X$ such that $T$ is a
  $(1+\eta)$-isometry with $\eta > 0$ so small that
  $(1+\eta)\gamma + \eta <  \alpha$, and $(1-\eta)(2-\beta) - \eta > 2 -
  \eps$. Put $z_n=\frac{Ty_n}{\|Ty_n\|}$ and
  $z=\sum_{n=1}^N\lambda_nz_n$. As $Tx=x$ we get
  \begin{align*}
    \|x-z\| &\le \|x-Ty\|+ \|Ty - z\|\\
                & \le \|T(x -y)\| + \sum_{n=1}^N\lambda_n\big|1 - \|Ty_n\|\big| \\
                &< (1+ \eta)\gamma + \max_{1 \le n \le N}{\big|1 -
                  \|Ty_n\|\big|}\\
                & \le (1+ \eta)\gamma  + \eta < \alpha.
  \end{align*}

  Moreover, for every $1 \le n \le N$ we have,
  \begin{align*}
    \|x - z_n\| & = \|T(x - \frac{y_n}{\|Ty_n\|})\| \\ & \ge (1-\eta)\|x -
                        \frac{y_n}{\|Ty_n\|}\|\\
                      & \ge (1-\eta)(\|x - y_n\| - \|y_n -
                        \frac{y_n}{\|Ty_n\|}\|)\\
                       & \ge (1-\eta)(2-\beta -
                         \frac{\big|1-\|Ty_n\|\big|}{\|Ty_n\|}\|y_n\|)\\
                       & \ge (1-\eta)(2-\beta -
                         \frac{\eta}{1-\eta}) > 2- \eps,
  \end{align*}
  Thus $(z_n)_{n=1}^N \subset \Delta_\eps(x)$ and as
  $\alpha > 0$ is arbitrarily chosen, we are done.
\end{proof}

\def\cprime{$'$} \def\cprime{$'$}
\providecommand{\bysame}{\leavevmode\hbox to3em{\hrulefill}\thinspace}
\providecommand{\MR}{\relax\ifhmode\unskip\space\fi MR }
\providecommand{\MRhref}[2]{%
  \href{http://www.ams.org/mathscinet-getitem?mr=#1}{#2}
}
\providecommand{\href}[2]{#2}

\end{document}